\begin{document}

	\newtheorem{theorem}{Theorem}[section]
	\newtheorem{lemma}[theorem]{Lemma}
	\newtheorem{proposition}[theorem]{Proposition}
	\newtheorem{corollary}[theorem]{Corollary}
	\newtheorem{question}[theorem]{Question}
	\newtheorem{conjecture}[theorem]{Conjecture}
	
	\theoremstyle{definition}
	\newtheorem{definition}[theorem]{Definition}
	\newtheorem{example}[theorem]{Example}

	\theoremstyle{remark}
	\newtheorem{remark}[theorem]{Remark}

	\newcommand{\K}{\mathbb{K}}
	\newcommand{\R}{\mathbb{R}}
	\newcommand{\C}{\mathbb{C}}
	\newcommand{\Q}{\mathbb{Q}}
	\newcommand{\Z}{\mathbb{Z}}
	\newcommand{\Proj}{\mathbb{P}}
	\newcommand{\A}{\mathbb{A}}
	\newcommand\blfootnote[1]{%
		\begingroup
		\renewcommand\thefootnote{}\footnote{#1}%
		\addtocounter{footnote}{-1}%
		\endgroup
	}

	\newcommand{\NS}{NS}
	\address{Fabrizio Anella\\ Dipartimento di Matematica e Fisica\\ Universit\`{a} Roma 3\\ Largo San Leonardo Murialdo 1\\ 00146\\ Rome\\ Italy}
	\email{fabrizio.anella2@uniroma3.it}

		\author{Fabrizio Anella}
		\date{March 2019}
		\title{Rational curves on genus one fibrations}
		
	\begin{abstract}
		In this paper we look for necessary and sufficient conditions for a genus one fibration to have rational curves.
		We show that a projective variety with log terminal singularities that admits a relatively minimal genus one fibration $X\rightarrow B$ does contain vertical rational curves if and only if it not isomorphic to a finite \'etale quotient of a product $\tilde{B}\times E$ over $B$. Many sufficient conditions for the existence of rational curves in a variety that admits a genus one fibration are proved in this paper.
	\end{abstract}
\maketitle
\blfootnote{\textup{2010} \textit{Mathematics Subject Classification}: Primary: 14J32; Secondary: 14E30, 14D06.} 
\blfootnote{\emph{Key words and phrases.} Elliptic fiber space, Calabi--Yau variety, genus one fibration, rational curve, augmented irregularity.}

	\section*{Introduction}
The starting point of this work has been the following folklore conjecture.

\begin{conjecture}
	Every (possibly singular) Calabi--Yau variety does contain rational curves.
\end{conjecture}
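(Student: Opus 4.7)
The plan is to combine the structural decomposition theorems for singular Calabi--Yau varieties with fibration methods, among which the main theorem of the present paper plays a central role. I would begin by invoking the singular Beauville--Bogomolov decomposition (Druel, Greb--Guenancia--Kebekus, H\"oring--Peternell) to reduce, up to a finite quasi-\'etale cover, to the three canonical building blocks: abelian varieties, irreducible holomorphic symplectic varieties, and strict Calabi--Yau factors. Abelian factors contain no rational curves at all, so the conjecture has to be interpreted modulo this obvious obstruction; for irreducible holomorphic symplectic manifolds the existence of uniruled divisors is known in wide generality (Amerik--Verbitsky, Charles--Mongardi--Pacienza). Thus the substantive content is concentrated in the strict Calabi--Yau summands.

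For a strict Calabi--Yau $X$ of dimension $n$, the second step would be to extract a nontrivial fibration $X\to B$ by exhibiting a nef non-big class and appealing to (log) Abundance to make it semiample. Two subcases then appear. If the general fibre has dimension one, it is forced to be an elliptic curve by adjunction, and the main theorem of this paper applies to the resulting genus one fibration: vertical rational curves exist unless $X$ is a finite \'etale quotient of a product $\tilde{B}\times E$ over $B$, in which case $\tilde{B}$ is itself a Calabi--Yau of dimension $n-1$ and an induction on $n$ closes the case. If instead the fibre dimension is intermediate, a general fibre is a lower-dimensional Calabi--Yau, and rational curves are produced inside a single fibre by induction; if the fibre dimension equals $n$ (i.e.\ the fibration is trivial), one has no information from this step and one passes to the next case.

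The principal obstacle, and the reason the conjecture remains open, is the case of a simply connected strict Calabi--Yau $X$ whose nef and pseudo-effective cones coincide and consist entirely of big classes, so that $X$ admits no nontrivial fibration at all. Here the decomposition theorem is vacuous and every fibration-based technique, including the results of this paper, is unavailable. The natural fallbacks---Mori's bend-and-break after reduction modulo $p$, or producing a rational curve on a nearby degeneration and then specializing---face structural difficulties: $K_X\equiv 0$ deprives one of the $K$-negativity hypothesis that bend-and-break requires, and the deformation strategy tends to be circular without an a priori existence result on some member of the family. Any genuine progress on the full conjecture will almost certainly require a new idea aimed precisely at this rigid, fibration-free regime.
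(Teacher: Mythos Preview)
This statement is a \emph{conjecture} in the paper, not a theorem: the author calls it a ``folklore conjecture'' and notes immediately that it ``is unsolved even for smooth Calabi--Yau manifolds in dimension three.'' There is no proof in the paper to compare your proposal against; the conjecture serves only as motivation for the results that follow.

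Your proposal is not a proof either, and to your credit you do not pretend otherwise---you explicitly identify the fibration-free regime as ``the reason the conjecture remains open.'' A few technical remarks nonetheless. First, under the paper's own Definition~\ref{CY} a Calabi--Yau variety is required to have $\tilde{q}(X)=0$, so abelian varieties are already excluded and your caveat ``modulo this obvious obstruction'' is unnecessary here. Second, your outline invokes Abundance (``appealing to (log) Abundance to make it semiample'') as if it were available, but Abundance is itself open in the relevant dimensions, so this step is not a reduction but a replacement of one open problem by another. Third, the existence of rational curves on singular irreducible holomorphic symplectic varieties is not settled in the generality you suggest. Finally, even when a genus one fibration is available and the main theorem of the paper forces $X$ to be an \'etale quotient of $\tilde{B}\times E$, your inductive step would require knowing the conjecture for $\tilde{B}$; but $\tilde{B}$ need not be Calabi--Yau in the sense of Definition~\ref{CY} (no control on $K_{\tilde{B}}$ or $\tilde{q}(\tilde{B})$ is asserted), so the induction does not close as stated.
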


This conjecture is unsolved even for smooth Calabi--Yau manifolds in dimension three. We started studying Calabi--Yau varieties that admit a genus one fibration and we got a positive answer in \cite{Anella}. At this point it is natural to ask the following question.

\begin{question}\label{question}
	Under what conditions a genus one fibration does contain rational curves?
\end{question}
Without asking anything on the base of the fibration, we can say something only on the rational curves that are vertical for the fibration. 
The main purpose of this article is to prove the following answer to Question \ref{question} that gives a complete characterization in the case of relatively minimal genus one fibration between $\mathbb{Q}$-factorial varieties.

\begin{theorem}\label{ifandonlyifintro}
	Let $X\xrightarrow{\pi} B$ be genus one fibration such that $K_X\equiv_{\textrm{num}}\pi^*L$ for some $\Q$-Cartier $\Q$-divisor on $B$. Suppose moreover that $X$ and $B$ are $\mathbb{Q}$-factorial. Then $X$ does not contain vertical rational curves if and only if there exists a finite cover $\tilde{B} $ of $B$ and a genus one curve $E$ such that there is a finite globally \'etale cover of $X$ isomorphic to $\tilde{B}\times E$ over $B$.
\end{theorem}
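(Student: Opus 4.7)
The strategy is to prove the two directions separately. The forward direction is a short lifting argument, while the reverse direction requires three substantive steps: fiber analysis, a rigidity argument for the $j$-invariant, and a globalization to trivialize the fibration after finite base change.

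For the forward direction, assume $f\colon\tilde{B}\times E\to X$ is a finite \'etale cover over $B$. If $C\subset X$ were a vertical irreducible rational curve, any irreducible component $C'$ of $f^{-1}(C)$ would again be rational (finite \'etale covers of $\mathbb{P}^1$ are disjoint unions of $\mathbb{P}^1$'s) and vertical for $\tilde{B}\times E\to B$. Since $\tilde{B}\to B$ is finite, its fibers are $0$-dimensional, so $C'$ must lie inside some $\{p\}\times E\cong E$, which has genus one and contains no rational subcurve; contradiction.

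For the reverse direction, assume $X$ has no vertical rational curves. In any fiber $\pi^{-1}(b)$, each irreducible component is non-rational and hence has $p_a\geq 1$; combined with connectedness and $p_a(\pi^{-1}(b))=1$, a standard additivity argument on $\chi(\mathcal{O})$ forces the fiber to be of the form $m_b E_b$ with $E_b$ a smooth elliptic curve and $m_b\geq 1$. The map $b\mapsto j(E_b)$ defines a morphism $j\colon B\to\mathbb{A}^1$, regular on the smooth locus of $\pi$ and extending across multiple fibers via the \'etale local model $(\Delta\times E_b)/\mathbb{Z}_{m_b}$. Since $B$ is projective and $\mathbb{A}^1$ is affine, $j$ is constant, so $\pi$ is isotrivial with fiber a fixed elliptic curve $E$.

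It remains to globalize isotriviality into an actual \'etale-quotient structure. The twist of the isotrivial fibration is classified by a class in $H^1_{\mathrm{et}}(B,\underline{\mathrm{Aut}}(E))$, trivialized by some finite \'etale cover $B_1\to B$; the resulting $E$-torsor $X\times_B B_1\to B_1$ has a class in $H^1_{\mathrm{et}}(B_1,E)$ killed by a further finite \'etale cover $B_2\to B_1$; finally the multiple fibers over $B_2$ are unwound by a ramified cyclic cover $\tilde{B}\to B_2$ extracting $m_b$-th roots along the multiple-fiber loci. The composite $\tilde{B}\times E\to X$ is then finite \'etale by the local free-quotient description $D\times E\to(D\times E)/\mathbb{Z}_{m_b}$.

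The main obstacle is this last step. Constructing $B_2$ requires controlling a Tate--Shafarevich-type obstruction to trivializing the torsor, which a priori need not vanish after any finite cover; the hypothesis $K_X\equiv_{\mathrm{num}}\pi^*L$ together with $\mathbb{Q}$-factoriality likely enters here to bound this obstruction. Likewise, combining the ramified cover $\tilde{B}\to B_2$ with the preceding \'etale covers into a single finite cover whose pullback is globally \'etale over $X$ demands a careful compatibility check of the local $\mathbb{Z}_{m_b}$-actions at multiple fibers. I would expect the argument to reduce to the curve case via transverse slicing by curves in $B$, using Kodaira's classification of singular fibers together with \'etale cohomology, and perhaps building on the result of \cite{Anella}.
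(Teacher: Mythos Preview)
Your easy direction (the one you call ``forward'') matches the paper's argument essentially verbatim.

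For the substantive direction your outline is correct in spirit---reduce to isotriviality, then trivialize after a finite cover---but there are two genuine gaps, one you flag yourself and one you do not.

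First, you never address equidimensionality. Over a base of dimension $\geq 2$ some fibres of $\pi$ can have dimension $\geq 2$, so your ``$p_a(\pi^{-1}(b))=1$ plus additivity of $\chi(\mathcal{O})$'' argument is not even formulated for such $b$. The paper handles this with Kawamata's result: since $K_X\equiv_{\mathrm{num}}\pi^*L$, the exceptional locus of $\pi$ is covered by $K_X$-negative, hence rational, vertical curves; the hypothesis then forces $\pi$ to be equidimensional. Only after this do the one-dimensional fibre arguments (Kodaira's list via transverse slicing, as in the paper's Lemma on isotriviality) become available.

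Second, and more seriously, the globalization step you describe is exactly where your proof breaks down, as you yourself suspect. Killing the $\underline{\mathrm{Aut}}(E)$-twist by a finite \'etale cover is fine, but the torsor class in $H^1_{\mathrm{\acute{e}t}}(B_1,E)$ need not die after any finite \'etale cover, and your subsequent ramified cyclic cover to unwind multiple fibres need not interact well with the earlier steps. The paper avoids this entire discussion by invoking Koll\'ar's orbibundle theorem \cite[Theorem~44]{kollar2015deformations}: once $\pi$ is equidimensional and isotrivial, $X$ is isomorphic over $B$ to an orbibundle, i.e.\ there is directly a finite \emph{quasi}-\'etale cover $\tilde{B}\times E\to X$ over $B$. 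The upgrade from quasi-\'etale to globally \'etale is then a one-line Riemann--Hurwitz argument: if the cover ramifies at some $(t,z)$, the restriction to $\{t\}\times E$ is a ramified map from a genus-one curve, whose image is therefore rational and vertical---contradiction. This replaces your entire third paragraph.

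So the missing idea is not cohomological bookkeeping but rather the two black boxes (Kawamata for equidimensionality, Koll\'ar for the orbibundle structure) together with the Riemann--Hurwitz trick to pass from quasi-\'etale to \'etale.
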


A key ingredient for the proof of this theorem is the proof of the following theorem, that is a generalization of the main result in \cite{Anella} and of the main theorem of \cite{diverio2016rational}.
\begin{theorem}\label{maintheoremintro}
	Let $(X,\Delta )$ be a klt pair such that there exists a surjective morphism $\phi:X\rightarrow B$ to a variety of dimension $n-1$. Suppose moreover $K_X+\Delta \equiv_{\textrm{num}} \phi^*L$ for some $\Q$-Cartier $\Q $-divisor $L$ on $B$ and the augmented irregularity of $X$ is zero, then there exists a subvariety of $X$ of dimension $n-1$ covered by rational curves contracted by $\phi$.
\end{theorem}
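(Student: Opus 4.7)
The plan is to analyze the singular fibers of $\phi$ via Kawamata's canonical bundle formula and dichotomize according to whether the locus of singular fibers in $B$ is a divisor or has higher codimension. In the first case, the rational components of the singular fibers sweep out the required subvariety; in the second case, the vanishing of the augmented irregularity will force a contradiction, ruling it out.

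First, I would observe that the general fiber $F$ of $\phi$ satisfies $(K_X+\Delta)\cdot F = (\phi^*L)\cdot F = 0$, so $(F,\Delta|_F)$ is a klt log Calabi--Yau pair of dimension one. Either $F\cong\mathbb{P}^1$ (in which case $X$ is covered by vertical rational curves and the conclusion is immediate) or $F$ is a smooth elliptic curve with $\Delta|_F=0$; assume we are in the latter case. Applying Kawamata's canonical bundle formula to $\phi$ then yields
\[K_X+\Delta\equiv_{\mathrm{num}}\phi^*(K_B+B_\phi+M_\phi),\]
where $B_\phi$ is the discriminant $\mathbb{Q}$-divisor and $M_\phi$ is the nef moduli $\mathbb{Q}$-divisor encoding the variation of the $j$-invariant $j\colon B\dashrightarrow \overline{M}_{1,1}$ of the generic fiber.

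Let $\Sigma\subset B$ denote the closed subset over which fibers are singular. If $\Sigma$ contains a divisor $D$, then over a general point of each irreducible component of $D$ the fiber is a Kodaira-type singular curve of arithmetic genus one (of type $I_n$, $II$, $III$, $IV$, $I_n^*, \ldots$), whose irreducible components are all rational. Sweeping out these rational components as the point varies over $D$ produces the desired $(n-1)$-dimensional subvariety of $X$ covered by $\phi$-vertical rational curves. If instead $\Sigma$ has codimension at least two, then $\phi$ is smooth over $U:=B\setminus\Sigma$, and $j|_U$ takes values in $\mathbb{A}^1$ (smoothness precludes $j=\infty$). Since $B$ is normal and $B\setminus U$ has codimension $\geq 2$, Hartogs extends $j|_U$ to a morphism $\bar j\colon B\to\mathbb{A}^1$, which is constant by properness of $B$. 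Hence $\phi_U$ is isotrivial with fixed smooth elliptic fiber $E$. A finite \'etale cover $\tilde U\to U$ kills the monodromy of $R^1\phi_{U*}\mathbb{Z}$, and a further finite \'etale cover trivializes the resulting $E$-torsor. By Zariski--Nagata purity on the normal variety $B$, the combined cover extends to a finite \'etale cover $\tilde B'\to B$, and pulling $\phi$ back produces a finite \'etale cover of $X$ isomorphic to $\tilde B'\times E$. But then $h^1(\mathcal O_{\tilde B'\times E})\geq h^1(\mathcal O_E)=1$, contradicting $\tilde q(X)=0$, so this case cannot occur.

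The main obstacle is the isotrivial case: trivializing the $E$-torsor class by a further finite \'etale cover of $X$ (requiring the torsor class to be torsion, a property ultimately guaranteed by $\tilde q(X)=0$ via the structure theory of torsors for abelian schemes), extending the resulting covers across the codimension-$\geq 2$ locus $\Sigma$ via purity of the branch locus on the klt base $B$, and correctly handling both the boundary $\Delta$ and the multiple fibers that can contribute to $B_\phi$ without forcing $\Sigma$ to be a divisor. These form the new technical layer required to pass from the Calabi--Yau setting of \cite{Anella} and \cite{diverio2016rational} to the present klt pair setting with non-trivial boundary.
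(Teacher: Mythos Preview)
Your overall architecture mirrors the paper's, but the dichotomy you set up has a genuine gap. When the singular locus $\Sigma\subset B$ has a divisorial component $D$, you assert that the general fiber over $D$ is a Kodaira singular fiber ``whose irreducible components are all rational.'' This is false: Kodaira's list includes the multiple smooth fiber ${}_mI_0$ ($m\ge 2$), whose reduced structure is a smooth genus-one curve. Such fibers are singular (non-reduced) and hence lie over $\Sigma$, yet contribute no rational curves. So a divisorial $\Sigma$ does \emph{not} by itself produce the uniruled $(n-1)$-fold you want; you must still confront the case where every codimension-one singular fiber is a multiple elliptic curve. The paper handles exactly this via Lemma~\ref{isotri}: if no vertical uniruled divisor exists, then all codimension-one singular fibers are of type ${}_mI_0$, the $j$-invariant extends across them (it is well-defined on a multiple elliptic curve), and the family is isotrivial. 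Thus the correct dichotomy is ``there is a vertical uniruled divisor'' versus ``$\phi$ is isotrivial,'' not ``$\operatorname{codim}\Sigma=1$'' versus ``$\operatorname{codim}\Sigma\ge 2$.''

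In the isotrivial branch your sketch also diverges from the paper and is shakier. You propose to kill monodromy by a finite \'etale cover of $U=B\setminus\Sigma$ and then invoke Zariski--Nagata purity to extend it over $B$; but purity in that form needs the base to be regular, and here $B$ is merely normal (indeed klt). More seriously, once you are forced to include the multiple-fiber locus in the isotrivial case, the covers you build are only quasi-\'etale over $B$ and over $X$, and the resulting cover of $X$ need not be globally \'etale nor even everywhere defined. The paper sidesteps all of this by appealing to Koll\'ar's orbibundle theorem \cite[Theorem~44]{kollar2015deformations}: an equidimensional isotrivial genus-one fibration is birational over $B$ to an orbibundle $X_{\mathrm{orb}}$, and the product $\tilde B\times E\to X_{\mathrm{orb}}$ is quasi-\'etale by construction. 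One then checks $X$ and $X_{\mathrm{orb}}$ are isomorphic in codimension one (no degenerate divisors on either side), compactifies the induced quasi-finite \'etale map to $X$ via Zariski's Main Theorem, and reads off a quasi-\'etale cover $Y\to X$ with $q(Y)>0$ from the reflexive $1$-forms on $\tilde B\times E$. Your torsor/purity route might be salvageable, but it would need substantially more care than you indicate, and the canonical bundle formula you introduce plays no role in either argument.
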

 
This theorem has some interesting consequences like the following. 

\begin{corollary}\label{corollariofigo}
	Let $X$ be a projective variety of dimension $n$ with at most log terminal singularities, $\kappa(X)=n-1$ and $\tilde{q}(X)=0$. Then $X$ does contain rational curves.
\end{corollary}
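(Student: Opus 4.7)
The plan is to argue by contradiction: suppose $X$ contains no rational curves.

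First, since $(X,0)$ is klt, Kawamata's cone theorem ensures that every $K_X$-negative extremal ray of $\overline{NE}(X)$ is spanned by a rational curve of bounded degree. The assumption that $X$ has no rational curves therefore forces $K_X$ to be nef.

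Next, I claim that $K_X$ is semiample. Since $K_X$ is nef, its numerical dimension satisfies $\nu(K_X)\geq \kappa(X)=n-1$. If $\nu(K_X)=n$, then $K_X$ would be nef and big, and hence $\kappa(X)=n$, contradicting the hypothesis $\kappa(X)=n-1$; therefore $\nu(K_X)=\kappa(X)=n-1$. By Kawamata's semiampleness theorem for nef $\Q$-Cartier divisors on klt varieties whose numerical and Iitaka dimensions coincide, $K_X$ is semiample. Let $\phi\colon X\to B$ be the induced Iitaka morphism: $\dim B=n-1$ and $K_X\sim_{\Q}\phi^*A$ for some ample $\Q$-Cartier divisor $A$ on $B$.

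Finally, the pair $(X,0)$ is klt, the relation $K_X\equiv_{\textrm{num}}\phi^*A$ holds, and $\tilde q(X)=0$ is part of the hypothesis, so all assumptions of Theorem~\ref{maintheoremintro} are in place. It produces an $(n-1)$-dimensional subvariety of $X$ covered by rational curves contracted by $\phi$, contradicting the initial assumption and concluding the argument. The main preparatory step is the abundance input, namely the verification $\nu(K_X)=\kappa(X)$ that triggers Kawamata's semiampleness theorem; once $K_X$ is known to be semiample, the fibration required by Theorem~\ref{maintheoremintro} is essentially immediate.
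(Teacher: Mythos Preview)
Your proof is correct and follows essentially the same route as the paper: use the cone theorem to reduce to $K_X$ nef, compare $\nu(K_X)$ with $\kappa(X)$ to get equality, invoke Kawamata's semiampleness to obtain the Iitaka fibration $\phi$ with $K_X\sim_{\Q}\phi^*A$, and then apply Theorem~\ref{maintheoremintro}. The only cosmetic difference is your framing as a proof by contradiction, whereas the paper argues directly.
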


Using a very nice result of \cite{lazic2018maps} we can do slightly better in the smooth case.
\begin{corollary}\label{lazic}
	Let $X$ be a smooth projective variety of dimension $n\geq 2$ and $\tilde{q}(X)=0$. If $X$ is covered by genus one curves, then it contains a rational curve.
\end{corollary}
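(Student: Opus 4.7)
The plan is to reduce Corollary \ref{lazic} to Theorem \ref{maintheoremintro} applied to the trivial pair $(X,0)$. Since $X$ is smooth, this pair is automatically klt, and the hypothesis $\tilde{q}(X)=0$ is in place. The only missing ingredient is a surjective morphism $\phi\colon X\to B$ to a variety of dimension $n-1$ such that $K_X\equiv_{\textrm{num}}\phi^*L$ for some $\Q$-Cartier $\Q$-divisor $L$ on $B$.

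This is exactly what I would extract from the ``very nice result'' of \cite{lazic2018maps}. The hypothesis that $X$ is covered by genus one curves provides a covering family parametrised by some $(n-1)$-dimensional base, which a priori yields only a rational map; the role of the Lazić-type input is to promote this to an honest morphism $\phi\colon X\to B$ of relative dimension one whose general fibers are smooth genus one curves, and, crucially, such that $K_X$ is numerically a pullback from $B$. The latter property is believable because a smooth genus one general fiber $F$ already gives $K_X\cdot F=0$, but upgrading this fiberwise numerical triviality to the global relation $K_X\equiv_{\textrm{num}}\phi^*L$ is exactly the delicate content we are citing.

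Once $\phi$ is in place, Theorem \ref{maintheoremintro} applies verbatim to $(X,0)\xrightarrow{\phi} B$ and produces an $(n-1)$-dimensional subvariety of $X$ covered by rational curves contracted by $\phi$. In particular $X$ contains a rational curve, which is the desired conclusion.

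The main obstacle is the first step: faithfully invoking \cite{lazic2018maps} to convert the covering family of genus one curves into a morphism $\phi$ satisfying the precise numerical pullback relation $K_X\equiv_{\textrm{num}}\phi^*L$. Without this numerical pullback property the hypotheses of Theorem \ref{maintheoremintro} would not be met, so the entire argument hinges on using the Lazić input in its sharp form rather than some weaker birational or purely rational-map version.
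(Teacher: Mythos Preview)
Your overall strategy matches the paper's: invoke \cite{lazic2018maps} to manufacture a relatively minimal genus one fibration on $X$, and then feed it into Theorem \ref{maintheoremintro}. There is, however, one genuine omission in how you set things up.

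The Lazi\'c--Peternell result you are citing (their Theorem 6.12) does \emph{not} take merely ``$X$ is covered by genus one curves'' as input. Its hypothesis is that $X$ is covered by elliptic curves \emph{and contains no rational curves}; under that assumption it produces an equidimensional (hence relatively minimal) genus one fiber space $X\to W$. This is why the paper's proof begins with ``Suppose by contradiction that $X$ does not contain rational curves'' before invoking \cite{lazic2018maps}. Without that contradiction hypothesis you cannot apply their theorem, and there is no general mechanism that turns a covering family of genus one curves into an honest morphism --- indeed, resolving the indeterminacy of the associated rational map typically involves blow-ups, and it is precisely the absence of rational curves that forces those blow-ups to be trivial.

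Once the contradiction framework is in place, the rest of your argument is fine and is essentially what the paper does. The numerical pullback relation $K_X\equiv_{\textrm{num}}\phi^*L$ that you flagged as the ``main obstacle'' is actually automatic here: an equidimensional genus one fibration from a smooth variety with no rational curves has no degenerate divisors (cf.\ Remark \ref{remarkinsufficienttype}), so it is relatively minimal. The paper then phrases the endgame as ``proceed as in the proof of Theorem \ref{maintheoremintro}'' to reach an irregular quasi-\'etale cover, but your direct application of the theorem itself yields the same contradiction.
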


In Section \ref{preliminari} we fix some notations and definitions used in the subsequent parts.
In Section \ref{irr} we discuss some properties of the augmented irregularity.
In Section \ref{dim} we prove Theorem \ref{maintheoremintro}. The proof is organized in several lemmas which are of independent interest.
In Section \ref{consegueza} we explain some generalizations and consequences of Theorem \ref{maintheoremintro} and we give other partial answers to Question \ref{question}.
An interesting generalization of Theorem \ref{maintheoremintro} is Theorem \ref{generalmaintheorem}.
There will be a more accurate analysis of Question \ref{question} in the forthcoming PhD thesis of the author.

\subsubsection*{Acknowledgements} The author warmly thanks Raffaele Carbone for the continuous helpful discussion during the writing of this paper and Andreas H\"{o}ring for many useful comments. The author would also like to thank his advisor, Simone Diverio, for the continuous help provided during this work. 
	
	\section{Preliminaries}
	\label{preliminari}

In this paper every variety will be an irreducible projective variety over the complex numbers. The variety $X$ will be always normal and of dimension $n\geq 2$. The notations and standard properties about singularities that is used in this article can be found for example in \cite{kollar2008birational}. For the reader's convenience we recall some definitions.

\begin{definition}
Let $\phi :X\rightarrow B$ be a surjective morphism of normal projective varieties an $D\in \operatorname{WDiv}(X) $ be a prime Weil divisor. We say that $D$ is \emph{exceptional} if $\operatorname{cod}_B(\phi(D))\geq 2$.
We say that $D$ is of \emph{insufficient fiber type} if $\operatorname{cod}_B(\phi(D))=1$ and there exists another prime Weil divisor $D'\neq D$ such that $\phi (D')=\phi(D)$. In either of the above cases, we say that $D$ is \emph{degenerate}.
\end{definition}
\begin{definition}
A morphism $f : Z \rightarrow Y$ between normal varieties is called \emph{quasi-\'etale} if $f$ is quasi-finite and \'etale in codimension one. The \emph{augmented irregularity} of a variety $Y$ is the following, not necessarily finite, positive integer $$\tilde{q}(Y):=\sup\{q(Z)\ | \ Z\rightarrow Y \text{ is a finite quasi-\'etale cover} \}.$$ 
\end{definition}
 \begin{remark}
 	The above definition of quasi-\'etale morphism is not the same of \cite{catanese2007}.
 \end{remark}
\begin{definition}
	Let $f:X\rightarrow Y$ a surjective projective morphism of normal variety. The \emph{singular values} of $f$ is the following subset of $Y$
	$$\operatorname{Sv}(f)=\{y\in Y |\ \operatorname{dim}(f^{-1}(y))>\operatorname{dim}(X)-\operatorname{dim}(Y) \vee f^{-1}(y)\text{ is singular} \}.$$
\end{definition}
\begin{remark}
	The singular values of $f$ is the image of the singular locus of $f$. For the interested reader the definition of singular locus of a morphism can be found at the following link: \texttt{\url{http://stacks.math.columbia.edu/tag/01V5}}. We do not give the definition of singular locus of a morphism because the definition is too technical and we just need the given characterization of the image of the singular locus.
\end{remark}
One can associate to any elliptic curve a complex number called its $j$-invariant. This association is modular, which means that a genus one fibration $f:Y\rightarrow B$ comes with a rational map $j:B \dashrightarrow \C $ called $j$-function that is at least defined over the smooth values of $f$.
For some standard facts about the $j$-function of an elliptic family the reference can be found in \cite{kodaira1963compact} or \cite{hartshorne2009deformation}. A brief explanation in the case of genus one fibration can be found in \cite{Anella}.
\begin{definition} \label{CY}
	A variety with at most klt singularities $Y$ (resp. a klt pair $(Y, \Delta)$) is a \emph{Calabi--Yau} variety (resp. a \emph{log Calabi--Yau}) if $K_Y\equiv_{\text{num}}0 $ (resp. $K_Y+\Delta\equiv_{\text{num}}0$) and $\tilde{q}(Y)=0$.
\end{definition}
In some recent works on Beauville--Bogomolov decomposition (see for example \cite{greb2011singular}, \cite{horing2018algebraic}, \cite{druel2018decomposition}) several definitions of Calabi--Yau varieties appeared. In our Definition \ref{CY} we include also products of Calabi--Yau and irreducible holomorphic symplectic varieties in the sense of \cite{greb2011singular}.
\begin{definition}

A variety with at most klt singularities $Y$ (resp. a pair $(Y, \Delta)$) together with a fibration $Y\overset{f}{\rightarrow} B$ is a \emph{Calabi--Yau fiber space} (resp. a \emph{log Calabi--Yau fiber space}) if the generic fiber $Y_t$ of $f$ (resp. the pairs $(Y_t, \Delta_t)$) has numerically trivial canonical bundle (resp. $K_{Y_t}+\Delta_t \equiv_{\text{num}}0$). 
	
\end{definition}
If the general fiber is a curve we are mainly interested in the case where the boundary $\Delta $ does not intersect the general fiber. Indeed if the intersection is non-trivial then $Y$ is uniruled.
\begin{definition}
	An \emph{genus one fibration} is a morphism $\pi:Y\rightarrow B$ between normal projective varieties with connected fibers and such that the general fiber is a smooth genus one curve. 
\end{definition}

A particular case of Calabi--Yau fiber space is the following.

\begin{definition}
	A Calabi--Yau fiber space $Y\overset{f}{\rightarrow} B$ (resp. a log Calabi--Yau fiber space $(Y, \Delta)\overset{f}{\rightarrow} B$) is called a \emph{relatively minimal} (resp. \emph{log) Calabi--Yau fiber space} if $K_Y\equiv_{\text{num}}f^* L$ ($K_Y+\Delta\equiv_{\text{num}}f^* L$) for some $\Q$-Cartier $\Q $-divisor on $B$.
\end{definition}
In particular when we say that genus one fibration is relatively minimal we are saying that $X$ has at most log terminal singularities.

An important class of examples of Calabi--Yau fiber spaces is given by orbibundles. We just recall the construction of orbibundles because they are a key point in the proof of Theorem \ref{maintheoremintro}. More properties and details can be found in the article of Koll\'ar \cite{kollar2015deformations}.

Let $\tilde{B}$ be a normal variety, $F$ a variety with at most klt singularities, $K_F \equiv_{\text{num}}0 $ and $\tilde{Y}:= \tilde{B}\times F$ their product. Let $G$ be a finite group and $\rho_B:G\rightarrow \operatorname{Aut}(\tilde{B}), $ $\rho_F:G\rightarrow \operatorname{Aut}(F),$ two faithful representations.
\begin{definition}
	An \emph{orbibundle} is the Calabi--Yau fiber space $$(Y\rightarrow B):=\tilde{Y}/ G \rightarrow \tilde{B}/G$$
	obtaind as quotient respect the diagonal representation of $G$.
\end{definition}

	\section{Some remarks on the augmented irregularity}\label{irr}
	It is not difficult to show that the augmented irregularity is a birational invariant for smooth projective varieties. To show this fact we take two smooth birational projective varieties $X$ and $X'$. We can suppose, considering a resolution of the birational map, that there is a well-defined morphism $X'\rightarrow X$. Any quasi-\'etale cover $Z$ of $X$ (hence globally \'etale) can be pulled back to an \'etale cover $Z'$ of $X'$. These two covers are smooth and birational, so $q(Z)=q(Z')$. Since this argument works for any quasi-\'etale cover taking the $\sup$ we get $\tilde{q}(X)=\tilde{q}(X')$. However the augmented irregularity is not a birational invariant for projective varieties with canonical singularities. Indeed the standard construction of a Kummer surface is a counterexample.

\begin{example}
 We take an elliptic curve $E$ and then consider the quotient $X:=(E\times E)/ \pm$ of the product of two copies of $E$ by the involution. The quotient map $E\times E \rightarrow X$ is a quasi-\'etale cover so $\tilde{q}(X)\geq 2$, moreover by \cite[Remark 4.3]{druel2018decomposition} it holds the equality. However a minimal resolution $\tilde{X} $ of $X$ is a K3 surface. In particular $\tilde{X}$ is simply connected, hence $\tilde{q}(\tilde{X})=0$. The variety $X$ is thus also an example of a regular variety with non trivial augmented irregularity.
 
\end{example}
However it follows from the last part of the proof of Theorem \ref{maintheoremintro} that the augmented irregularity is invariant for birational morphisms that are isomorphism in codimension one of projective varieties with at most klt singularities.

It is natural to ask whether there exists some manageable conditions for the vanishing of the augmented irregularity of a variety. It is easy to check \cite[Remark 1.17]{Anella} that a variety $X$, say smooth for simplicity, with finite fundamental group has $\tilde{q}(X)=0$. For varieties with numerically trivial canonical divisor an interesting characterization is given in \cite[Theorem 11.1]{greb2017klt}, where the authors proved that, in this setting, $\tilde{q}(X)=0$ if and only if for any $k>0$ there are no non-trivial symmetric reflexive forms, \textsl{i.e.} $H^0(X,\operatorname{Sym}^{[k]}\Omega^1_X)=0 $ $\forall k>0$. We prove that an implication still holds without the assumption on the canonical bundle: the following is a sufficient condition for the vanishing of the augmented irregularity which does not rely on computations of invariants on quasi-\'etale covers, but only on invariants of the variety under investigation.

\begin{proposition}
	Let $X$ be a projective variety with at most log terminal singularities. If $H^0(X, \operatorname{Sym}^{[k]}\Omega^1_X)=0$ for every $k>0$, then $\tilde{q}(X)=0$.
\end{proposition}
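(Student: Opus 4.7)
The plan is, given any finite quasi-\'etale cover $f : Z \to X$, to rule out the existence of a nonzero reflexive $1$-form on $Z$; this will force $q(Z) = 0$ and hence $\tilde q(X) = 0$. For the reduction from $q(Z)$ to $H^0(Z, \Omega^{[1]}_Z)$ I would use that quasi-\'etale covers of klt varieties are again klt, and hence have rational singularities: for any resolution $\tilde Z \to Z$ one gets
\[
q(Z) = h^1(\mathcal O_Z) = h^1(\mathcal O_{\tilde Z}) = h^0(\tilde Z, \Omega^1_{\tilde Z}),
\]
and this last group equals $H^0(Z, \Omega^{[1]}_Z)$ by the extension theorem of Greb--Kebekus--Kov\'acs--Peternell. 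So it suffices to rule out nonzero reflexive $1$-forms on any such $Z$.

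Suppose now that $\omega \in H^0(Z, \Omega^{[1]}_Z)$ is nonzero. I would first replace $f$ by its Galois closure, which remains quasi-\'etale, with Galois group $G$ of some order $N$, and replace $\omega$ by its pullback (which is again nonzero). Then the norm
\[
\sigma := \prod_{g \in G} g^* \omega,
\]
formed on the smooth locus of $Z$ inside the symmetric algebra $\bigoplus_k \operatorname{Sym}^k \Omega^1_Z$, is a $G$-invariant section of $\operatorname{Sym}^N \Omega^1$. It is nonzero at a smooth point because the symmetric algebra on the cotangent space is a polynomial ring over a domain, in which a product of nonzero linear forms cannot vanish; by reflexivity $\sigma$ then extends uniquely to a nonzero $G$-invariant element of $H^0(Z, \operatorname{Sym}^{[N]} \Omega^{[1]}_Z)$.

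Finally, I would descend $\sigma$ to $X$. Let $U \subset X$ be the \'etale locus of $f$; its complement has codimension $\geq 2$, and standard \'etale Galois descent identifies $H^0(U, \operatorname{Sym}^N \Omega^1_U)$ with the $G$-invariant part of $H^0(f^{-1}(U), \operatorname{Sym}^N \Omega^1)$. Because $\operatorname{Sym}^{[N]} \Omega^1_X$ is reflexive on the normal variety $X$, its global sections coincide with its sections over $U$, so $\sigma$ gives a nonzero element of $H^0(X, \operatorname{Sym}^{[N]} \Omega^1_X)$, contradicting the hypothesis. The most delicate step is precisely this descent: one has to combine \'etale Galois descent in codimension one with the $S_2$ property of reflexive sheaves on $X$ to promote a section on the regular locus of $X$ to a genuine global section, which is where the quasi-\'etaleness of $f$ is used in an essential way.
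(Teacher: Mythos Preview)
Your argument is correct and follows essentially the same strategy as the paper: pass to a Galois quasi-\'etale cover, produce a $G$-invariant symmetric form from the translates of a reflexive $1$-form, descend over the \'etale locus, and extend by reflexivity. Your execution is in fact a bit cleaner than the paper's: you take the norm $\sigma=\prod_{g\in G} g^{*}\omega$ directly in the symmetric algebra (automatically $G$-invariant and symmetric), and you verify nonvanishing by observing that $\operatorname{Sym}^{\bullet} T^{*}_p$ is an integral domain, whereas the paper constructs a tensor $\sum_{\tau}\bigotimes_{\rho}\tau^{*}\rho^{*}\omega$, then checks symmetry separately, and proves nonvanishing by choosing a tangent vector avoiding all the kernels $\operatorname{Ker}(\rho^{*}\omega)_p$.
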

Let us recall that by $\operatorname{Sym}^{[k]}\Omega_X^1$ we mean the sheaf of the reflexive symmetric forms on $X$.
\begin{proof}
	Suppose by contradiction that there is a quasi-\'etale cover $\tilde{X} \rightarrow X$ with $H^1(\tilde{X},\mathcal{O}_{\tilde{X}})\neq 0$. The variety $\tilde{X}$ is klt \cite[Proposition 5.20]{kollar2008birational} and by \cite[Proposition 6.9]{greb2017klt} there is a non-zero reflexive form $\omega \in H^0 (\tilde{X}, \Omega^{[1]}_{\tilde{X}})$. By definition the sections of a reflexive sheaf are exactly the sections on the regular part of $X$. So to construct a non-zero global section of $\operatorname{Sym}^{[k]}\Omega^1_X$ we construct an element in $H^0(X_{\text{reg}}, \operatorname{Sym}^{k}\Omega^1_{X_{\text{reg}}})$. Now we consider just the restriction to the regular locus of $X$:
	
	$$Y:=\tilde{X}\times_X X_{\text{reg}}\rightarrow X_{\text{reg}}.$$
	This is a finite \'etale cover, so we can find a further \'etale cover, finite over the regular part $\tilde{Y}\rightarrow X_{\text{reg}}$, that is Galois. Let $G$ be the group of deck transformations of $\tilde{Y}$ over $X_{\text{reg}}$. By abuse of notations we call again $\omega$ the pullback to $\tilde{Y}$ of $\omega$. Now consider the section $ \tilde{\alpha}:= \sum_{\tau \in G} \bigotimes_{\rho \in G} \tau^* \rho^* \omega \in H^0(\tilde{Y},  (\Omega^1_{\tilde{Y}} )^{\otimes N})$ where $N=|G|$. This section is invariant under the action of the deck trasformations, so it descends to a section $\alpha$ of $ H^0(X_{\text{reg}},  (\Omega^1_{X_{\text{reg}}} )^{\otimes N}).$ By construction it is easy to check that this section is symmetric, \textsl{i.e.} $\omega $ belongs to $H^0(X_{\text{reg}}, \operatorname{Sym}^N (\Omega^1_{X_{\text{reg}}} )).$ It is less trivial to prove that $\tilde{\alpha}$, and hence $\alpha$, is non-zero. 
	
	For any non-zero element $\gamma \in H^0(\tilde{Y},  \Omega^1_{\tilde{Y}})$ and a generic point $p\in \tilde{Y}$ the space $\operatorname{Ker}(\gamma)\subset T_{\tilde{Y}, p}$ is a proper subspace. Since $\omega\neq 0$ and the elements $\rho \in G$ are automorphisms (and we are working over $\C$), also $\rho^* \omega$ are non-zero elements in $H^0(\tilde{Y},  \Omega^1_{\tilde{Y}})$. So for generic $p \in \tilde{Y}$ we can choose a tangent vector $$0\neq v\in T_{\tilde{Y}, p}\setminus \bigcup_{\rho \in G} \operatorname{Ker}((\rho^* \omega)_p).$$ Now we can evaluate our section $\tilde{\alpha}$ at the vector $v^{\otimes N}$. The computations are the following:
	$$\tilde{\alpha}(v^{\otimes N})=\sum_{\tau \in G} \bigotimes_{\rho \in G} \tau^* \rho^* \omega(v)=\sum_{\tau \in G}\prod_{\rho \in G} \tau^* \rho ^* \omega (v)=N\prod_{\rho \in G} \rho ^* \omega (v)\neq 0.$$
	So we have constructed a non-zero section of $ H^0(X_{\text{reg}},  \operatorname{Sym}^N \Omega^1_{X_{\text{reg}}} ))$ that corresponds to a non-zero section of  $ H^0(X,  \operatorname{Sym}^{[N]} \Omega^1_X ).$
	\end{proof}
	\section{Proof of the main theorem}
	\label{dim} 
	We start with a lemma that is already stated in \cite{diverio2016rational}.
\begin{lemma}
	\label{isocod}
	Let $X\overset{\pi}{\rightarrow} B$ be a genus one fibration. If the subvariety of the singular values $\operatorname{Sv}(\pi)\subset B$  has codimension at least two in $B$ then the family $\pi $ is isotrivial.
\end{lemma}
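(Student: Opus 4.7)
The plan is to analyze the $j$-function $j \colon B \dashrightarrow \C$ associated with $\pi$, as recalled in the Preliminaries. Since the $j$-invariant is canonically attached to the isomorphism class of a smooth genus one curve (independently of any choice of origin or polarization), the assignment $b \mapsto j(X_b)$ is not merely a rational map but an honest regular morphism on the locus $U := B \setminus \operatorname{Sv}(\pi)$ parametrizing smooth fibers; this uses only the classical fact that the $j$-invariant varies algebraically in smooth families of genus one curves.

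Next, the geometric hypothesis enters: $\operatorname{codim}_B \operatorname{Sv}(\pi) \geq 2$ means that $U$ is a big open subset of the normal projective variety $B$. By the standard algebraic Hartogs-type extension for regular functions on a normal variety across a closed subset of codimension at least two, the morphism $j|_U \colon U \to \C \cong \A^1$ extends uniquely to a morphism $\bar{\jmath} \colon B \to \A^1$.

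Since $B$ is projective and irreducible while $\A^1$ is affine, $\bar{\jmath}$ must be constant. Hence all smooth fibers of $\pi$ share the same $j$-invariant and are therefore pairwise isomorphic as complex genus one curves; by definition, this is the isotriviality of $\pi$.

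The only really substantive point is the first step, namely ensuring that $j$ is regular (and not just rational) on $U$: once that is established, the Hartogs extension across the codimension-two locus and the rigidity of morphisms from a projective variety to the affine line do the rest, and no further information about the total space $X$ or its singularities is needed.
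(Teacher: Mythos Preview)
Your argument is correct. The paper does not supply its own proof here, referring instead to \cite[Lemma 1.15]{Anella}; your route via regularity of the $j$-map on the smooth locus, Hartogs extension across the codimension-two locus on the normal base $B$, and constancy of any morphism from a projective variety to $\A^1$ is the standard one, and the paper itself alludes to exactly this method just after the proof of Lemma~\ref{isotri}.
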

For the proof of this result we refer to \cite[Lemma 1.15]{Anella}

	Now we study the general fibers over the singular values of an genus one fibration.

\begin{lemma}\label{singularfiber}
	Let $\phi:X\rightarrow B$ be a genus one fibration and $Z:=\operatorname{Sv}(\phi)$. Suppose that $cod_B(Z)=1$, then a general fiber over $Z$ is $ m E+\sum m_i R_i$ where $E$ is a genus one curve and $R_i$ are rational curves.
\end{lemma}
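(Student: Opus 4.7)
The plan is to reduce to Kodaira's classification of singular fibres of relatively minimal elliptic surfaces by base-changing to a transverse curve germ. Let $W$ be an irreducible component of $Z$ of codimension one in $B$, and fix a very general point $z\in W$: I may arrange that $z$ lies in the smooth loci of both $B$ and $W$, that $z$ does not belong to any other component of $Z$, and that $\phi^{-1}(z)$ is purely one-dimensional (this last condition holds outside a proper closed subset of $W$ by upper semicontinuity of fibre dimension). Choose a smooth curve germ $C\subset B$ through $z$ meeting $W$ transversally at $z$ only, and form the base change $X_C:=X\times_B C$ with induced morphism $\pi_C\colon X_C\to C$; by the transversality assumption the scheme-theoretic fibre $\pi_C^{-1}(z)$ agrees with $\phi^{-1}(z)$.

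Next, I would resolve the singularities of $X_C$ to obtain a smooth projective surface $\widetilde{X}_C$ equipped with a genus one fibration to a smooth completion of $C$, and then pass to its relatively minimal model $\bar X_C\to C$ by successively contracting fibral $(-1)$-curves. By Kodaira's classification of singular fibres of a relatively minimal elliptic surface over a smooth curve, every such fibre has one of the standard shapes: either a multiple smooth elliptic curve ${}_mI_0$, or a configuration of smooth rational curves with appropriate multiplicities (the types $I_n$, $II$, $III$, $IV$, $I_n^*$, $IV^*$, $III^*$, $II^*$ and their multiple analogues). In every case the fibre can be written as $mE+\sum m_i R_i$ with $E$ a smooth genus one curve (allowing $m=0$) and $R_i$ smooth rational.

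To conclude I would transfer this description back to $X$. The morphism $\widetilde{X}_C\to X_C$ and the birational map $\widetilde{X}_C\dashrightarrow\bar X_C$ only contract or introduce smooth rational curves inside the fibre over $z$, so the fibre of $\pi_C$ over $z$, and hence $\phi^{-1}(z)$, again has the shape $mE+\sum m_i R_i$. The main subtlety I anticipate lies in controlling what happens at singular points of $X$ (equivalently of $X_C$) inside the special fibre: to keep the exceptional divisors rational one must pick $z$ generic enough that the singularities of $X_C$ over $z$ are rational surface singularities, which is automatic under mild hypotheses on $X$ such as klt, and one uses the genericity of $z\in W$ to exclude higher-codimension degenerations that fall outside Kodaira's list.
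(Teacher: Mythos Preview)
Your approach is essentially the same as the paper's: reduce to a surface fibred in genus one curves over a smooth curve and invoke Kodaira's list. The only real difference is in how the curve is produced. The paper does it globally: it fixes a very ample $H$ on $B$, takes $n-2$ general members $D_i\in|H|$ so that $C=D_1\cap\cdots\cap D_{n-2}$ is a smooth complete curve in $B_{\mathrm{reg}}$ not contained in $\operatorname{Sv}(\phi)$, and the pulled-back complete intersection $S=\phi^{-1}(D_1)\cap\cdots\cap\phi^{-1}(D_{n-2})$ is a normal surface with $\phi|_S:S\to C$ a genus one fibration; then it reads off the shape of the singular fibres from Kodaira's table. Your version instead chooses a local transverse curve germ through a very general point of a component of $Z$, base-changes, resolves, and runs down to a relatively minimal model before appealing to Kodaira.

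Both routes land in the same place. The paper's global Bertini cut has the small advantage that $C$ is already a complete smooth curve, so no germ-to-projective completion step is needed, and the identification $H^0(X,\phi^*H)\simeq H^0(B,H)$ makes it transparent that a general $C$ meets each codimension-one component of $Z$ at general points. Conversely, you are more explicit than the paper about the passage through a resolution and a relatively minimal model, and about the caveat that the exceptional curves introduced must be rational; the paper simply asserts that one ``looks at Kodaira's table'' for $\phi|_S$ and leaves this step implicit. Your remark that rationality of the exceptional locus is automatic under a klt-type hypothesis on $X$ is a fair observation, and indeed the paper only ever applies this lemma in that setting.
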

\begin{proof}
	We can study the restriction of $\phi$ to a surface as follows.
	Let $H$ be a very ample divisor on $B$ such that $(n-2)H+L$ is globally generated. The pullback $\phi^*H$ is a globally generated Cartier divisor. Moreover there is an isomorphism 
	$$H^0(X,\phi^*H)\simeq H^0(B,\phi_*(\phi^*H))\simeq H^0(B,H)$$
	because $\phi $ has connected fibers. This implies that general elements in $|H|$ are general also in $|\phi^*(H)|$. So we choose $n-2$ general divisors $D_1, \ldots , D_{n-2} \in |H|$ such that $C:=D_1\cap \ldots \cap D_{n-2}$ is a smooth irreducible curve in $B_{\textrm{reg}}$ not contained in the locus of singular values of $\phi$ and $S:=\phi^{-1}(D_1)\cap \ldots \cap \phi^{-1}(D_{n-2})$ is a normal surface. Looking at the Kodaira's table \cite[Section V.7]{barth2015compact} it is easy to check that the singular fibers of $\phi|_S$ are $ m  E+\sum m_i R_i$ where $E$ is an elliptic curve and $R_i$ are rational curves. The condition on the dimension of $Z$ insures that a general point in $Z$ lies on a curve obtained as general intersection of hyperplane sections.
\end{proof}
\begin{remark}\label{singellittiche}
	If $X$ contains no uniruled codimension one subvarieties but $\operatorname{Sv}(\phi)$ has codimension one in $B$, then the fibers over any general point of $\operatorname{Sv}(\phi)$ of dimension $n-2$ is a multiple genus one curve.
\end{remark}

\begin{remark}\label{remarkinsufficienttype}
	It follows from Lemma \ref{singularfiber} and from a result of Kawamata \cite{kawamata1991length} that a minimal genus one fibration (with at most log terminal singularities) with no uniruled codimension one subvarieties has no degenerate divisors.
\end{remark}

 Lemma \ref{singularfiber} can be seen as a soft version of Kodaira's table in higher dimension. With the same strategy of the proof of this lemma one can certainly do a better classification of singular fibers. Using the techniques of Lemma \ref{singularfiber} we can control only the general singular fiber in codimension one. Other fibers may appear in greater codimension.
	Now we can merge together these lemmas and prove the following result.

\begin{lemma}\label{isotri}
	Let $X\overset{\phi}{\rightarrow} B$ be a genus one fibration between normal varieties such that $X$ does not contain codimension one subvarieties that are uniruled. Then the family $X\overset{\phi}{\rightarrow} B$ is isotrivial.
\end{lemma}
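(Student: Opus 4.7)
The plan is to reduce the problem to the criterion of Lemma~\ref{isocod} by using the $j$-function to control what happens over the codimension-one part of the singular value locus. First I would set $Z := \operatorname{Sv}(\phi)$ and observe that if $\operatorname{cod}_B(Z) \geq 2$, Lemma~\ref{isocod} concludes immediately; hence I may assume that $Z$ contains a component of codimension one in $B$.

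Next I would invoke the hypothesis that $X$ contains no uniruled codimension-one subvariety. By Remark~\ref{singellittiche} (which rests on Lemma~\ref{singularfiber}), the fiber over a general point of any codimension-one component of $Z$ must then be a multiple smooth genus-one curve $mE$, with no rational components. The key point to exploit is that the $j$-invariant of such a multiple fiber equals the $j$-invariant of the underlying elliptic curve $E$, which is a finite complex number. Therefore the rational map $j \colon B \dashrightarrow \C$ extends to a regular morphism on an open subset $U \subset B$ whose complement has codimension at least two in $B$.

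To finish, I would use normality of $B$ together with Hartogs' principle to extend $j|_U$ uniquely to a regular function $j \colon B \to \A^1 = \C$, and then use projectivity of $B$ to conclude that this regular function is constant, forcing isotriviality of $\phi$. The main potential obstacle is justifying the extension of $j$ across the codimension-one multiple-fiber locus; this relies crucially on the absence of uniruled codimension-one subvarieties, which rules out genuinely degenerate fibers (with rational components) and ensures the $j$-values remain finite in codimension one on $B$.
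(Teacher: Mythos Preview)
Your argument is correct, and in fact the paper explicitly mentions your approach as an alternative immediately after its own proof: ``Another way to prove this lemma is to consider the $j$-map directly from $B$.''

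The paper's written-out proof proceeds differently. Instead of working with the global $j$-map on $B$, it cuts $B$ by general hyperplane sections down to curves $C$, observes that over each such curve the fibers are either smooth or multiple genus-one curves (by the same appeal to Remark~\ref{singellittiche}), and then argues that the $j$-map restricted to the complete curve $C$ is regular with finite values, hence constant. An induction on $\dim B$ then patches these one-dimensional results together: the family is isotrivial over an ample hypersurface $H$ by induction, and the complete-intersection curves, which all meet $H$, sweep out $B$.

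Your route is more direct: you avoid the cutting-and-induction scaffolding entirely by exploiting normality of $B$ via Hartogs' extension. The only point requiring care in your version is the one you flag yourself---that $j$ genuinely extends as a \emph{regular} function across the multiple-fiber locus, not merely that the set-theoretic value $j(E)$ is finite. This is exactly what the paper justifies (on curves) by semistable reduction, and the same reasoning works in your setting. Both approaches ultimately rest on the same two ingredients: Lemma~\ref{singularfiber} to rule out rational components in codimension one, and the fact that the $j$-invariant of a multiple smooth fiber is finite.
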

This lemma should be compared with \cite[Proposition 6.5]{lazic2018maps} and \cite[Proof of Corollary 3.34]{voisin2003some}.
\begin{proof}[Proof of Lemma \ref{isotri}]
	We can assume by Lemma \ref{isocod} that $Z:=\operatorname{Sv}(\phi))$ has codimension one in $B$. The general fiber over $Z$ is classified by Lemma \ref{singularfiber}. Since there are no uniruled codimension one subvarieties, in the general fiber over $Z$ there are only multiple genus one curves.
	
	Now we can proceed cutting with hyperplane sections as in the proof of Lemma \ref{singularfiber}. In this way we get many curves $C$ in $B$ with only genus one fibers (possibly multiple) over them. Up to consider a finite possibly ramified base change we can assume this map has a section. 
	The $j$-invariant for multiple elliptic curves is well-defined as one can easily check with a semistable reduction. Since the curve $C$ is complete this implies that the $j$-map is constant, \textsl{i.e.} the family $\pi$ restricted over $C$ is isotrivial.
	
	For each curve $C$ obtained in this way we get an isotrivial family. From this fact it follows that the all the family over $B$ is isotrivial. Let us prove this fact by induction on the dimension of $B$. 
	
	There is nothing to prove if the dimension is one. By induction we can suppose that the family is isotrivial when restricted to an ample subvariety $H\subset B$. The fibers over curves that are general complete intersections are pairwise isomorphic and these curves must intersect $H$. The union of these curves dominates $B$. This implies that the family $ X\overset{\phi}{\rightarrow} B$ is isotrivial.
\end{proof}
Another way to prove this lemma is to consider the $j$-map directly from $B$. 

	Finally we can proceed with the proof of Theorem \ref{maintheoremintro}. For the reader's convenience we state again the result that we are going to prove.
\begin{theorem}
	Let $(X,\Delta )\xrightarrow{\phi} B$ be a relatively minimal log Calabi--Yau fiber space such that $\tilde{q}(X)=0$ and $\operatorname{dim}(B)=n-1$. Then there exists a subvariety of $X$ of dimension $n-1$ covered by rational curves contracted by $\phi$.
\end{theorem}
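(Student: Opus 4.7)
The strategy is to argue by contradiction: assume $X$ contains no codimension one subvariety covered by rational curves contracted by $\phi$, and derive $\tilde q(X)\ge 1$. The underlying mechanism is that the absence of such rational curves forces the genus one fibration $\phi$ to be isotrivial, which in turn forces $X$ to admit a finite quasi-\'etale cover that coincides in codimension one with a product $\widetilde B\times E$ over a finite cover $\widetilde B\to B$; the presence of the factor $E$ then produces a non-zero reflexive $1$-form on the cover.

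First I would establish that $\phi$ is isotrivial. By Lemma \ref{isocod} we may assume that $\operatorname{Sv}(\phi)$ has codimension exactly one in $B$. For any codimension one component $W\subset\operatorname{Sv}(\phi)$, Lemma \ref{singularfiber} describes a general fiber over $W$ as $mE+\sum m_iR_i$ with $E$ a genus one curve and the $R_i$ rational. As $b$ varies along $W$, the rational components sweep out a codimension one subvariety of $X$ covered by vertical rational curves; by the contradiction hypothesis no such subvariety exists, so every general codimension one singular fiber must be a multiple smooth genus one curve $mE$. The proof of Lemma \ref{isotri} then applies essentially verbatim: cutting $B$ with general hyperplane sections yields curves $C\subset B$ over which $\phi$ has only smooth, possibly multiple, genus one fibers, so the $j$-function is constant on $C$, and varying the sections shows $\phi$ is isotrivial.

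Next, with $\phi$ isotrivial of generic fiber $E$ and with all codimension one singular fibers of the form $mE$ (in particular, no insufficient-fiber-type divisors), I would construct a finite quasi-\'etale cover $\widetilde X\to X$ isomorphic in codimension one to $\widetilde B\times E$. The construction combines two ingredients: a cyclic ramified cover of $B$ at each codimension one multiple fiber $mE$ whose ramification order cancels the fiber multiplicity, producing a pullback of $X$ that is a genuine (non-multiple) family in codimension one while keeping the induced map on $X$ \'etale in codimension one; followed by an \'etale cover of $B$ trivialising the finite monodromy representation of $\phi$ on the fibre $E$. The resulting $\widetilde X$ inherits klt singularities, agrees with the smooth variety $\widetilde B\times E$ outside a codimension two locus, and therefore admits a non-zero reflexive $1$-form pulled back from a nowhere vanishing $1$-form on $E$. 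This gives $q(\widetilde X)\ge 1$, whence $\tilde q(X)\ge q(\widetilde X)\ge 1$, contradicting the hypothesis $\tilde q(X)=0$.

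I expect the main obstacle to be the rigorous construction of the quasi-\'etale product cover in the second step: one must verify that the ramified base changes around multiple fibers really do induce maps \'etale in codimension one on the total space (this relies on the precise codimension one fiber structure established in the first step, and hence indirectly on the relative minimality hypothesis $K_X+\Delta\equiv_{\textrm{num}}\phi^*L$, which in particular forces $\Delta\cdot F=0$ on a general fiber $F$ and so $\Delta$ is vertical), and one must arrange the entire procedure so as to be compatible with the klt pair $(X,\Delta)$ and with the normality of the intermediate covers.
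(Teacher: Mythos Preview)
Your strategy matches the paper's: argue by contradiction, use Lemmas \ref{isocod}--\ref{isotri} to force isotriviality, then produce a quasi-\'etale cover of $X$ isomorphic in codimension one to a product $\widetilde B\times E$ and read off a nonzero reflexive $1$-form.

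The substantive difference lies in how the product cover is obtained. The paper does not construct it by hand; instead it invokes Koll\'ar's orbibundle theorem \cite[Theorem~44]{kollar2015deformations}: once $\phi$ is isotrivial and equidimensional with no degenerate divisors (Remark~\ref{remarkinsufficienttype}), $X$ is birational over $B$ to an orbibundle $X_{\mathrm{orb}}$, the birational map is an isomorphism in codimension one because neither side has degenerate divisors and both are relatively minimal, and the product $\widetilde B\times E\to X_{\mathrm{orb}}$ is quasi-\'etale by construction. A short Zariski Main Theorem argument then upgrades the resulting quasi-finite \'etale map on a big open set of $\widetilde B\times E$ to a genuine finite quasi-\'etale cover $Y\to X$, and one computes $q(Y)$ via reflexive forms. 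Your proposed route---cyclic base change to kill the multiplicities followed by a finite \'etale cover trivialising the monodromy---is essentially an outline of what Koll\'ar's theorem packages; it can be made to work, but the details you flag (compatibility of successive covers, normality, \'etaleness in codimension one on the total space, handling of $\phi$-exceptional divisors) are exactly what that reference absorbs. Citing it is both shorter and safer.

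Two small points. First, the paper begins by noting that relative minimality only gives $(K_X+\Delta)|_{X_t}\equiv 0$, so the general fibre has genus $\le 1$; the genus~$0$ case (equivalently $\Delta\cdot F>0$) makes $X$ uniruled and is disposed of immediately. Your parenthetical claim that relative minimality ``forces $\Delta\cdot F=0$'' is not quite right---rather, if $\Delta\cdot F>0$ the conclusion is trivial. Second, you should explicitly rule out $\phi$-exceptional divisors (not just insufficient-fibre-type ones) before appealing to a product structure; in the paper this is Remark~\ref{remarkinsufficienttype}, which uses Kawamata's result on lengths of extremal rays together with the relative minimality hypothesis.
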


\begin{proof}
	We can suppose, eventually passing to the Stein factorization, that $\phi$ has connected fibers and $B$ is normal.
	
	
	Since $K_X+\Delta \equiv_{\text{num}} \phi^*L$, the restriction of $(K_X+\Delta)$ to a general fiber of $\phi$, \textsl{i.e.} $(K_X+\Delta)|_{X_t}$ is numerically trivial. It follows from standard arguments that a general fiber of $\phi$ is a smooth curve contained in the smooth locus of $X$, so by adjunction formula $K_{X_t}\sim K_X |_{X_t}$. This implies that $K_{X_t}\equiv_{\text{num}} -\Delta|_{X_t}$ and hence the general fiber has genus at most one. If the genus is zero the variety $X$ is uniruled, so we can suppose $\phi $ is a genus one fibration. Note that even if $\Delta$ and $K_X$ are not $\Q$-Cartier $\Q$-divisor their restriction on a neighborhood of a general fiber $X_t$ is $\Q$-Cartier. 
	
	If the fibration is non-isotrivial then there exists a uniruled divisor in $X$ by Lemma \ref{isotri}. It remains to study the case $\phi $ is a genus one fibration without exceptional divisors. Under these conditions by a result of Koll\'ar \cite[Theorem 44]{kollar2015deformations} $X$ is birational over $B$ to an orbibundle $X_{\text{orb}}$. By construction the variety $X_{\text{orb}}$ has no $\phi$-exceptional divisors. Using that $X$ and $X_{\text{orb}}$ are birational over $B$, both have no degenerate divisors (see Remark \ref{remarkinsufficienttype}) and they are relatively minimal over $B$, we know that $X$ and $X'$ are isomorphic in codimension one. Indeed under these conditions the birational morphism can't contract any divisor over $B$. Since $\tilde{X}:=\tilde{B}\times F\rightarrow X_{\text{orb}}$ is quasi-\'etale \cite[see Lemma 38]{kollar2015deformations} there exists an open subset $\tilde{X}^{\circ}\subset \tilde{X}$ such that the induced morphism $\tilde{X}^{\circ}\rightarrow X$ is quasi-finite, \'etale and $\operatorname{cod}_{\tilde{X}}(\tilde{X}^{\circ})\geq 2$. By Zariski Main Theorem a quasi-finite morphism factorizes through an open immersion and a finite morphism, so there exists a commutative diagram
		\[ \xymatrix{
		\tilde{X}^{\circ} \ar[r] \ar[rd] & Y \ar[d]^{\psi} \\
		    & X 	} \]
with $Y$ projective and the morphism $\psi$ is \'etale over the image of $\tilde{X}^{\circ}$. Since $\psi $ is finite and the complementary of the image of $\tilde{X}^{\circ}$ in $X$ has codimension at least two, the morphism $\psi$ is a quasi-\'etale cover of $X$. By some basic properties of the reflexive sheaves and by \cite[Proposition 6.9]{greb2011singular} the following chain of isomorphisms holds
$$\overline{H^1(Y, \mathcal{O}_Y )}\simeq 
H^0(Y, \Omega_Y^{ [ 1 ] } )\simeq H^0(\tilde{B}\times F, \Omega_{\tilde{B}\times F}^{ [ 1 ] } )\simeq $$
$$\simeq H^0(\tilde{B}, \Omega_{\tilde{B}}^{ [ 1 ] } )\oplus H^0( F, \Omega_{ F}^{ [ 1 ] } )\neq 0. $$ 
So $Y$ is a quasi-\'etale cover of $X$ that has positive irregularity and this is a contradiction. \end{proof}
	\section{Applications and remarks}
	\label{consegueza}
	In this section we talk about some applications of Theorem \ref{maintheoremintro}. 
We start noting the following fact that will be useful also in the following.
\begin{remark}
	In Theorem \ref{maintheoremintro} the condition on the augmented irregularity is needed only to avoid the case $X$ is isomorphic in codimension one to an orbibundle. More precisely the hypothesis $\tilde{q}(X)=0$ in Theorem \ref{maintheoremintro} (and also in the following corollaries) can be replaced by the following weaker condition: $X$ is not isomorphic in codimension one over $B$ to a quasi-\'etale quotient of $\tilde{B}\times E$ for some cover $\tilde{B}\rightarrow B$. 	
\end{remark} 
The following theorem is a partial answer to Question \ref{question} in the case there are some exceptional divisors.
\begin{theorem}\label{generalmaintheorem}
	Let $(X,\Delta)$ be a klt pair with $\tilde{q}(X)=0$ such that there exists a surjective morphism $\phi:X\rightarrow B$ to a variety of dimension $n-1$. Suppose moreover $K_X+\Delta \equiv_{\textrm{num}} \phi^*L+\sum a_i E_i$ for some $\Q$-Cartier $\Q $-divisor $L$ on $B$, some $\phi$-exceptional divisor $E_i$ and whose coefficients are not all strictly negative. Then $X$ does contain rational curves. 
\end{theorem}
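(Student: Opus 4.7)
The plan is to reduce to the relative Cone Theorem for klt pairs: once $K_X+\Delta$ is shown to fail being $\phi$-nef, the Cone Theorem produces a rational curve contracted by $\phi$. The hypothesis that the coefficients $a_i$ are not all strictly negative is precisely what should force this relative non-nefness.

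Since coefficients equal to zero contribute nothing to the numerical class, the substantive content of the hypothesis is that some $a_{i_0}$ is strictly positive. (If after discarding the zero-coefficient terms no $E_i$'s remain, we are reduced to $K_X+\Delta\equiv_{\textrm{num}}\phi^*L$ and Theorem \ref{maintheoremintro} applies directly, which is where the hypothesis $\tilde{q}(X)=0$ is used.) From now on we assume $a_{i_0}>0$.

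Next I would argue that $K_X+\Delta$ is not $\phi$-nef. Suppose for contradiction it were. Modulo $\phi$-numerically trivial classes, $K_X+\Delta$ coincides with $D:=\sum a_iE_i$, so $D$ would also be $\phi$-nef. Each $E_i$ is $\phi$-exceptional, so $\phi_*D=0$; applying the Negativity Lemma (Koll\'ar--Mori, Lemma 3.39) to $-D$ then forces $-D\geq 0$, contradicting $a_{i_0}>0$. Now that $K_X+\Delta$ is known not to be $\phi$-nef, the relative Cone Theorem for klt pairs yields a $(K_X+\Delta)$-negative extremal ray $R\subset\overline{NE}(X/B)$, and such a ray is generated by a rational curve $C$ contracted by $\phi$; this $C$ is the rational curve in $X$ we sought.

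The main obstacle I anticipate is the application of the Negativity Lemma, which is classically formulated for birational morphisms whereas here it is being used for a fibration. Since every $E_i$ is genuinely $\phi$-exceptional in the strong sense that $\operatorname{cod}_B\phi(E_i)\geq 2$, one can either reduce to the surface case by cutting $B$ with enough general hyperplanes through a point of $\phi(E_{i_0})$ and invoking Zariski's lemma on the resulting fibered surface, or else appeal to the version of the Negativity Lemma that is valid for arbitrary proper morphisms provided the tested divisor has zero pushforward. A secondary minor verification is that the relative Cone Theorem is available in the klt projective setting, which is by now standard.
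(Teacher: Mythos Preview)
Your approach is correct and essentially the same as the paper's: both split into the case of no exceptional contribution (where Theorem~\ref{maintheoremintro} and the hypothesis $\tilde q(X)=0$ are invoked) and the case where the positive part of $\sum a_iE_i$ is nonzero, in which a negativity/Hodge-index argument on the $\phi$-exceptional divisor yields a $(K_X+\Delta)$-negative curve and the Cone Theorem finishes. The paper packages the negativity step as \cite[Lemma~2.9]{laivarietiesfibered}, which is exactly the fibration version of the Negativity Lemma you propose to recover by cutting down to a fibered surface; the one detail the paper makes explicit and you do not is the preliminary reduction to $\mathbb{Q}$-factorial $X$ via a small $\mathbb{Q}$-factorialization, so that individual exceptional components are $\mathbb{Q}$-Cartier and the surface-cut argument goes through cleanly.
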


\begin{proof}
	Let us start supposing that $X$ is $\Q$-factorial. We can write $K_X+\Delta \equiv_{\text{num}} \phi^*L +D-D'$ with $D $ and $D'$ $\phi$-exceptional effective divisors with no common components. If there are no exceptional divisors this is Theorem \ref{maintheoremintro}. Otherwise it follows from Hodge Index Theorem that there exists a component $E$ of the divisor $D$ that is covered by curves that intersect negatively $K_X+\Delta$ \cite[Lemma 2.9]{laivarietiesfibered}. By Cone Theorem this implies that there are rational curves in $X$. 
	
	If $X$ is not $\Q$-factorial, in order to apply the result of Lai, we consider a $\Q$-factorialization $X_{\Q}\rightarrow X$ and with the same argument we get that there exists a divisor in $X_{\Q}$ covered by curves that intersect negatively $K_{X_{\Q}}+\Delta_{\Q}$. Since a $\Q$-factorialization is small, not all these curves can be contracted in $X$. By projection formula we get curves in $X$ with negative intersection with $K_X+\Delta$. The conclusion follows again by the Cone Theorem.  
\end{proof}
\begin{remark}\label{logcanonicaltre}
	Up to consider the new pair $K_X+\Delta+\varepsilon E_i$ one can prove that there exists rational curves, provided that not every $-a_i$ is bigger than the log-canonical threshold of $E_i$.
\end{remark}
\begin{corollary}
	In the setting of Theorem \ref{generalmaintheorem} with the further condition that $X$ is smooth in codimension two, \textsl{e.g.} $X$ has terminal singularities. Then $X$ contains an uniruled divisor.
\end{corollary}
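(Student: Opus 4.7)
The plan is to refine the argument of Theorem \ref{generalmaintheorem} so as to extract a full uniruled divisor, not just a rational curve. First, after passing to a small $\Q$-factorialization if needed (which does not affect the existence of uniruled divisors, being an isomorphism in codimension one), I write
\[
K_X + \Delta \;\equiv_{\textrm{num}}\; \phi^{*}L + D - D',
\]
with $D, D'$ effective $\phi$-exceptional $\Q$-divisors with no common components; the hypothesis that not every coefficient $a_i$ is strictly negative guarantees $D\neq 0$. Applying Lai's lemma \cite[Lemma 2.9]{laivarietiesfibered} exactly as in the proof of Theorem \ref{generalmaintheorem}, I obtain an irreducible component $E$ of $D$ and a covering family of curves $\{C_t\}_{t\in T}$, with $C_t \subset E$, satisfying $(K_X+\Delta)\cdot C_t<0$, together with the Hodge-index consequence $E\cdot C_t < 0$.

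The smoothness-in-codimension-two hypothesis now enters decisively. Since $\operatorname{Sing}(X)$ has codimension at least three in $X$, its trace on $E$ has codimension at least two in $E$; a generic curve $C_t$ from the covering family therefore avoids $\operatorname{Sing}(X)$ and lies in the locus where $E$ is Cartier and classical adjunction applies. Using $K_E|_{C_t} = (K_X+E)|_{C_t}$, I compute
\[
K_E \cdot C_t \;=\; \underbrace{(K_X+\Delta)\cdot C_t}_{<\,0} \;+\; \underbrace{E\cdot C_t}_{<\,0} \;-\; \underbrace{\Delta\cdot C_t}_{\geq\,0} \;<\; 0,
\]
where $\Delta\cdot C_t \geq 0$ because $\Delta$ is effective and a general $C_t$ is not contained in $\operatorname{Supp}(\Delta)$.

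Passing if necessary to the normalization $\tilde E \to E$ and pulling back the covering family (which stays a covering family meeting the smooth locus of $\tilde E$, since the birational locus and the singular trace are both of codimension at least two), the Miyaoka--Mori uniruledness criterion applied on $\tilde E$ yields a rational curve through a general point of $\tilde E$, hence of $E$. Thus $E$ is uniruled, and this provides the desired uniruled divisor on $X$ (the small modification at the start is an isomorphism in codimension one, so $E$ descends to a uniruled divisor on the original $X$).

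The main obstacle is to extract from Lai's lemma the sharper inequality $E\cdot C_t < 0$ in addition to $(K_X+\Delta)\cdot C_t < 0$: this is the Hodge-index step on which the lemma rests, but it has to be made explicit in order to plug it into adjunction. A secondary, essentially bookkeeping point is the passage to the normalization of $E$ and the verification that the covering family still has curves meeting the smooth locus with $K_{\tilde E}\cdot \tilde C_t < 0$; this is routine given the codimension-two smoothness of $X$.
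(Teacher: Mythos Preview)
Your argument is correct, but it takes a different route from the paper's own proof. Both proofs begin identically: pass to a small $\Q$-factorialization, invoke Lai's lemma to find a component $E$ of $D$ and a covering family $C_t = E \cdot H^{n-2}$ with $(K_X+\Delta)\cdot C_t < 0$. From there the arguments diverge. The paper simply observes that, since $X$ is smooth in codimension two, the general $C_t$ lies in $X_{\mathrm{reg}}$ and has $K_X\cdot C_t<0$ (as $\Delta\cdot C_t\ge 0$), and then applies Bend and Break \cite[Theorem~3.6]{Deb01} directly on the ambient variety $X$ to produce rational curves through the general point of $E$. You instead work \emph{intrinsically on $E$}: you extract the sharper Hodge-index inequality $E\cdot C_t<0$ (which is indeed available, since on the general surface $S=H_1\cap\dots\cap H_{n-2}$ the curve $E|_S$ is contracted by $\phi|_S$ and hence has negative self-intersection), feed it into adjunction to get $K_E\cdot C_t<0$, and then run Miyaoka--Mori on the normalization $\tilde E$.

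What each approach buys: the paper's argument is shorter and avoids adjunction on a possibly singular divisor, but the rational curves it produces live a priori only in $X$, so concluding that \emph{a divisor} is uniruled needs a small extra step (either the rational curves stay in $E$, or they sweep out all of $X$, in which case $X$ is uniruled). Your approach is slightly longer but yields directly that $E$ itself is uniruled, with no ambiguity about where the rational curves sit. One small omission in your write-up: when there are no $\phi$-exceptional divisors at all one has $D=0$, and you should fall back on Theorem~\ref{maintheoremintro} for that case, exactly as the proof of Theorem~\ref{generalmaintheorem} does.
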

\begin{proof}
	Since we are looking for a uniruled divisor and a $\Q$-factorialization is small, we can assume that $X$ is $\Q$-factorial.
	By the proof of Theorem \ref{generalmaintheorem} there exists a very ample line bundle $H$ and an irreducible component $E$ of $D$ such that $E\cdot H^{n-2}\cdot (K_X+\Delta)<0$. Since $X$ is smooth in codimension two the general complete intersection of $n-2$ elements in $|H|$ is contained in $X_{\textrm{reg}}$, so the general element in $E\cap |H|\cap \dots \cap |H|$ is a curve in the regular part of $X$ which intersects negatively $K_X+\Delta$. So to conclude it is sufficient to apply Bend and Break Theorem \cite[Theorem 3.6]{Deb01}.
\end{proof}
Returning to Question \ref{question}, we can give a complete answer (for vertical rational curves) in the case of a relatively minimal genus one fibration between $\mathbb{Q}$-factorial varieties.
\begin{theorem}\label{ifandonlyif}
	Let $X\xrightarrow{\psi} B$ relatively minimal genus one fibration such that $X$ and $B$ are $\mathbb{Q}$-factorial. Then $X$ does not contain vertical rational curves if and only if there is a finite globally \'etale cover of $X$ isomorphic to $\tilde{B}\times E$ over $B$, for some cover $\tilde{B}$ of $B$.
\end{theorem}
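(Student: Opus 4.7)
The $(\Leftarrow)$ direction is direct. Given a finite \'etale cover $f\colon Y=\tilde{B}\times E\to X$ commuting with the projections to $B$, any vertical rational curve $R\subset X$ has $\psi(R)$ equal to a point $b\in B$, so $f^{-1}(R)$ is a disjoint union of rational curves contained in the fiber $\tilde{B}_b\times E$. Since $\tilde{B}\to B$ is finite, this fiber is a finite disjoint union of copies of the genus one curve $E$ and contains no rational curve, a contradiction.

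For $(\Rightarrow)$, assume $X$ contains no vertical rational curve. I would retrace the argument of Theorem \ref{maintheoremintro}. By Lemma \ref{singularfiber}, the fiber of $\psi$ over a general point of each codimension-one component of $\operatorname{Sv}(\psi)$ has the form $mE+\sum m_i R_i$; the $R_i$ would be vertical rational curves, so the hypothesis forces $\sum m_iR_i=0$. The $j$-map argument of Lemma \ref{isotri} then shows that $\psi$ is isotrivial, and Koll\'ar's theorem \cite[Theorem 44]{kollar2015deformations} produces a finite group $G$ with faithful representations $\rho_B\colon G\to\operatorname{Aut}(\tilde{B})$ and $\rho_F\colon G\to\operatorname{Aut}(E)$ such that $X$ is birational over $B$ to the orbibundle $X_{\mathrm{orb}}=(\tilde{B}\times E)/G$. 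As in the proof of Theorem \ref{maintheoremintro}, relative minimality and the absence of degenerate divisors (Remark \ref{remarkinsufficienttype}) force this birational map to be an isomorphism in codimension one.

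The $\Q$-factoriality hypotheses on $X$ and $B$ now enter to promote this codimension-one isomorphism to a global one. Both $X$ and $X_{\mathrm{orb}}$ are $\Q$-factorial klt relative minimal models over $B$, and by the relative MMP any small birational map between them decomposes into a sequence of flops over $B$, each flopping curve being rational and vertical for $\psi$. The absence of vertical rational curves rules out every such flop, giving $X\cong X_{\mathrm{orb}}=(\tilde{B}\times E)/G$. Denote by $q\colon\tilde{B}\times E\to X$ the resulting quasi-\'etale quotient.

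It remains, finally, to show that $G$ acts freely on $\tilde{B}\times E$, which is the conceptual heart of the argument. Suppose $g\in G$ fixes some $(b_0,e_0)$. Then $g$ belongs to the stabilizer $G_{b_0}\subset G$ and $\rho_F(g)$ fixes $e_0\in E$. A standard quotient computation identifies the reduced fiber of $\psi\colon (\tilde{B}\times E)/G\to B$ over $[b_0]$ with the curve $E/\rho_F(G_{b_0})$; if $\rho_F(G_{b_0})$ contained any non-translation automorphism of $E$, this quotient would be rational, contradicting our hypothesis. Hence $\rho_F(G_{b_0})$ consists only of translations; but a non-zero translation of $E$ has no fixed point, so $\rho_F(g)=\mathrm{id}$, and by the faithfulness of $\rho_F$, $g=\mathrm{id}$. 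Thus $G$ acts freely, $q\colon\tilde{B}\times E\to X$ is globally \'etale, and the required cover is produced. The most delicate step is the MMP/flops argument of the previous paragraph; once $X=(\tilde{B}\times E)/G$ is in hand, the free-action argument is a clean consequence of the faithfulness of $\rho_F$.
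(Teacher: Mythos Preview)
Your argument is correct in outline and reaches the same conclusion, but it takes a longer route than the paper at the key step of identifying $X$ with the orbibundle.

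The paper proceeds more directly: it first invokes Kawamata \cite{kawamata1991length} to observe that, since $\psi$ is relatively minimal, any positive-dimensional exceptional locus of $\psi$ would already be swept out by vertical rational curves; hence $\psi$ is equidimensional. With equidimensionality and isotriviality in hand, Koll\'ar's \cite[Theorem~44]{kollar2015deformations} yields that $X$ is \emph{isomorphic} (not merely birational) over $B$ to an orbibundle $(\tilde B\times E)/G$. This bypasses entirely your flops/MMP step. Your route---birational from Koll\'ar, then isomorphism in codimension one via Remark~\ref{remarkinsufficienttype}, then a sequence of flops over $B$---is valid in principle, but it needs $X_{\mathrm{orb}}$ to be $\Q$-factorial for the flop decomposition to apply, and you have not justified this (it does not follow automatically from the $\Q$-factoriality of $B$, since $\Q$-factoriality is not preserved under finite covers). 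This is a technical gap you should close, or else replace by the paper's simpler equidimensionality argument.

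For the free-action step, your stabilizer analysis and the paper's Riemann--Hurwitz argument are two phrasings of the same idea. The paper simply notes that if the quasi-\'etale cover $\tilde B\times E\to X$ ramifies at $(t,z)$, then the restriction to $\{t\}\times E$ is a ramified map from a genus one curve, so its image is rational by Riemann--Hurwitz---a vertical rational curve, contradiction. Your version unpacks this via $E/\rho_F(G_{b_0})$ and the dichotomy translation/non-translation, which is equivalent but more elaborate. Either way the conclusion is the same.
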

\begin{proof}
	$(\Leftarrow)$ Suppose there is a finite cover $\tilde{B} \rightarrow B$ such that we have the following diagram
		\[ \xymatrix{
	\tilde{B}\times E\ar[r]^\phi \ar[d]^\pi & X\ar[d]^\psi	\\
\tilde{B}\ar[r]& B} \]
where $\phi $ is finite and globally \'etale. The restriction of $\phi$ to any fiber $\pi^{-1}(t)$ is \'etale, so the image in $X$ is again a genus one curve in $X$. Any fiber of $\psi$ is the image of a curve obtained in this way, so all the vertical curves have genus one.

 $(\Rightarrow)$ Since $\psi$ is relatively minimal the exceptional locus of $\psi$ is covered by rational curves contracted by $\psi$ by \cite{kawamata1991length}. So we can suppose that the $\psi $ is equidimensional. Moreover by Lemma \ref{isotri} we can assume that $\psi$ is isotrivial. Under these conditions we can apply \cite[Theorem 44]{kollar2015deformations} and obtain that $X$ is isomorphic to an orbibundle. This means that there is a finite quasi-\'etale cover $\tilde{B}\times E\xrightarrow{f} X$ over $B$. This map is not globally \'etale by assumption, so $f$ ramifies at some point $(t, z)$. The restriction of $f$ to the curve ${t}\times E$ is a map which ramifies from a genus one curve, so by Riemann--Hurwitz formula the image is a rational curve. By construction this curve is vertical for $\psi$.
\end{proof}
\begin{remark}
	The implication $(\Leftarrow)$ holds without the conditions on the regularity of $X$ and $B$.
\end{remark}
In particular an immediate consequence of this theorem is the following.
\begin{corollary}
	In the setting of Theorem \ref{ifandonlyif}, if $B$ does contain no rational curves, then $X$ contains rational curves if and only if there exists no finite \'etale cover of the form $\tilde{B}\times E\rightarrow X$ over $B$.
\end{corollary}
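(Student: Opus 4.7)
The plan is to reduce the statement to a direct application of Theorem \ref{ifandonlyif}. The key observation is that, under the hypothesis that $B$ contains no rational curves, any rational curve in $X$ must automatically be vertical for $\psi$.

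First I would argue that ``$X$ contains rational curves'' is equivalent to ``$X$ contains vertical rational curves'' in our setting. One direction is trivial: a vertical rational curve is in particular a rational curve. For the converse, given a rational curve $R \subset X$, I would look at its image $\psi(R) \subset B$. Since $R$ is the image of $\mathbb{P}^1$ under a morphism, $\psi(R)$ is either a point or the image of $\mathbb{P}^1$ in $B$, hence a (possibly singular) rational curve in $B$. By assumption $B$ has no rational curves, so $\psi(R)$ must be a point, meaning $R$ is vertical.

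Once this equivalence is established, the corollary follows immediately from Theorem \ref{ifandonlyif}: $X$ contains rational curves iff $X$ contains vertical rational curves iff there does not exist a finite globally \'etale cover of the form $\tilde{B} \times E \to X$ over $B$ (for some cover $\tilde{B} \to B$).

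There is essentially no hard step here. The only small subtlety is being careful that the image of a morphism from $\mathbb{P}^1$ to a variety with no rational curves must be a point, but this is standard: such an image is an irreducible curve whose normalization is dominated by $\mathbb{P}^1$ and hence is itself $\mathbb{P}^1$, contradicting the assumption unless the image has dimension zero. The proof will therefore be a short paragraph invoking Theorem \ref{ifandonlyif}.
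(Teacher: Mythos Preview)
Your proposal is correct and matches the paper's approach: the paper simply states that the corollary is an immediate consequence of Theorem \ref{ifandonlyif} and gives no further argument, and your observation that any rational curve in $X$ must be vertical when $B$ has no rational curves is exactly the missing one-line justification.
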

The cases that are still open of Question \ref{question} (for vertical rational curves) are the following cases where $X\overset{\phi}{\rightarrow}B$ is an isotrivial fibration:
\begin{itemize}
	\item $B$ is not $\Q$-factorial so to write the canonical bundle of $X$ as the pullback of a $\Q$-Cartier $\Q$-divisor on $B$ there are some technical problems.
	\item $X$ has not log terminal singularities so we are not able to control the exceptional divisor.
	\item $X$ has log terminal singularities and $K_X\equiv_{\text{num}} \phi^* L+\sum a_i E_i$ where $E_i$ are all the exceptional divisors, $a_i$ are strictly negative number. Moreover we can't expect that these coefficients are too small or the canonical bundle can't be nef or too big by Remark \ref{logcanonicaltre}. 
\end{itemize}


We stop trying to generalize Theorem \ref{generalmaintheorem} and we mention some particular cases and prove Corollary \ref{corollariofigo}.
\begin{corollary}
	Let $X$ be a projective variety of dimension $n$ with at most log terminal singularities, $\kappa(X)=n-1$ and $\tilde{q}(X)=0$. Then $X$ does contain rational curves.
\end{corollary}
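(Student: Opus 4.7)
The plan is to realize the Iitaka fibration of $X$ on a birational model and then invoke Theorem \ref{generalmaintheorem}. Since $\kappa(X)=n-1$, a sufficiently divisible pluricanonical system $|mK_{X}|$ defines a rational map $X\dashrightarrow B$ with $\dim B=n-1$. Taking a log resolution $\mu:Y\to X$ and Stein factorizing, I obtain a surjective morphism $\phi:Y\to B$ with connected fibers onto a normal projective base. Since $\kappa(X)=n-1$, the general fiber of $\phi$ has Kodaira dimension zero and dimension one, hence is a smooth genus one curve.

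Next, I analyze the numerical class of $K_{Y}$ relative to $\phi$. For $m$ sufficiently divisible, $mK_{Y}\sim_{\Q}\phi^{*}A+F$ with $A$ ample on $B$ and $F$ effective. Restricting to a general fiber and using $K_{Y_{t}}\equiv 0$ forces $F$ to be $\phi$-vertical. I decompose $F=F_{\textrm{ins}}+F_{\textrm{exc}}$ into its insufficient-fiber-type part (components dominating a divisor of $B$) and its $\phi$-exceptional part. If $F_{\textrm{ins}}\neq 0$, Lemma \ref{singularfiber} describes the general fiber over the corresponding prime divisor $D\subset B$ as $mE+\sum m_{j}R_{j}$ with $R_{j}$ rational curves; if some $R_{j}$ appears, these curves on $Y$ descend to rational curves on $X$ (away from the $\mu$-exceptional locus), while if only multiple genus one components appear, the contribution of $F_{\textrm{ins}}$ can be absorbed into a new $\phi^{*}L'$ after modifying $L$. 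Combined with the discrepancy formula $K_{Y}=\mu^{*}K_{X}+\sum e_{j}E_{j}$ (with $e_{j}>-1$ by the klt hypothesis on $X$), this puts $K_{Y}$ in the form $K_{Y}\equiv_{\textrm{num}}\phi^{*}L+\sum a_{i}E_{i}$, with all $E_{i}$ being $\phi$-exceptional and not every $a_{i}$ strictly negative.

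Theorem \ref{generalmaintheorem} then produces rational curves on $Y$. In fact, the proof of that theorem exhibits an irreducible component $E\subset\sum E_{i}$ that is uniruled by curves of negative intersection with $K_{Y}$. Since $\mu$ contracts only divisors whose images in $X$ have codimension at least two, a general ruling of $E$ is not $\mu$-exceptional, so it descends to a rational curve in $X$.

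The main obstacle is the augmented irregularity hypothesis on $Y$: since $\tilde{q}$ is not a birational invariant of klt varieties (as the Kummer example in Section \ref{irr} illustrates), $\tilde{q}(X)=0$ does not transfer directly to $\tilde{q}(Y)=0$. I would circumvent this by invoking the weaker hypothesis allowed by the remark preceding Theorem \ref{generalmaintheorem}: it suffices that $Y$ not be isomorphic in codimension one over $B$ to a quasi-\'etale quotient of a product $\tilde{B}\times E$. If $Y$ admitted such a structure, then a quasi-\'etale cover $\tilde{B}\times E\to Y$ composed with $\mu$ and suitably normalized would yield a finite cover of $X$ with positive irregularity that is quasi-\'etale by a ramification analysis using the klt condition, contradicting $\tilde{q}(X)=0$.
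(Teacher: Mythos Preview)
Your route is substantially more complicated than the paper's, and the detour through a resolution creates a gap you do not close. The paper argues directly on $X$: if $K_X$ is not nef, the Cone Theorem produces rational curves; if $K_X$ is nef, then $\nu(K_X)\ge\kappa(X)=n-1$, and $\nu(K_X)=n$ would force $K_X$ big, so $\nu(K_X)=\kappa(K_X)=n-1$ and Kawamata's theorem gives that $K_X$ is semi-ample. Hence the Iitaka fibration is already a \emph{morphism} $\varphi_{K_X}:X\to B$ with $K_X\sim_{\Q}\varphi_{K_X}^*H$, and Theorem~\ref{maintheoremintro} applies to $X$ itself, where the hypothesis $\tilde q(X)=0$ is available by assumption. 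No resolution, no exceptional divisors, no Theorem~\ref{generalmaintheorem}.

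By passing to a resolution $Y$ you lose control of $\tilde q$, as you correctly note, and your proposed fix does not work as stated. Suppose $Y$ is isomorphic in codimension one over $B$ to an orbibundle with quasi-\'etale cover $\tilde B\times E\to X_{\mathrm{orb}}$. Composing with $\mu$ gives only a generically finite rational map $\tilde B\times E\dashrightarrow X$; normalizing $X$ in this function field yields a finite cover $Z\to X$ which is indeed quasi-\'etale (the bad locus in $X$ has codimension $\ge 2$), but $Z$ is only \emph{birational} to $\tilde B\times E$, not isomorphic in codimension one: the preimage in $\tilde B\times E$ of the locus in $X$ over which $\mu$ is not an isomorphism contains the preimages of the $\mu$-exceptional divisors, which are divisorial. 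So the reflexive-form argument used in the proof of Theorem~\ref{maintheoremintro} does not transfer, and you have no reason to conclude $q(Z)>0$. A secondary issue: your claim that in $K_Y\equiv\phi^*L+\sum a_iE_i$ ``not every $a_i$ is strictly negative'' is asserted, not proved; the discrepancy bound $e_j>-1$ for the $\mu$-exceptional divisors does not by itself control the signs of the coefficients of the $\phi$-exceptional part after your absorption step.
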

\begin{proof}
	If the canonical bundle of $X$ is not nef, then there are rational curves in $X$ by the cone theorem. The numerical dimension of $K_X$ is greater than the Kodaira dimension $n-1$. If $\nu (K_X)=n $ then the canonical bundle is big and $X$ is of general type, that is a contradiction. So $\nu(K_X)=n-1=\kappa(K_X)$ and the canonical bundle is semi-ample. In particular the Iitaka fibration of the canonical bundle gives a genus one fibration $\varphi_{K_X}: X \rightarrow B$ with $K_X\sim_\mathbb{Q} \varphi_{K_X}^*(H)$ for a $\Q$-Cartier $\Q$-divisor $H$ on $B$. In particular we can apply Theorem \ref{maintheoremintro} and get the thesis.
\end{proof}

We state Theorem \ref{maintheoremintro} without boundary.
\begin{corollary}\label{withnoboundary}
	Let $X\rightarrow B $ be a relatively minimal Calabi--Yau fiber space with $dim(X)=dim(B)+1$ and $\tilde{q}(X)=0$. Then there exists a subvariety of $X$ of codimension one covered by rational curves contracted in $B$.
\end{corollary}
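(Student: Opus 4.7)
The plan is to observe that this corollary is essentially Theorem \ref{maintheoremintro} specialised to the case of trivial boundary, so the proof should consist of nothing more than unpacking the definitions to verify that every hypothesis of Theorem \ref{maintheoremintro} is satisfied, and then invoking it.

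First I would set $\Delta = 0$ and check that $(X, 0)$ is a klt pair in the sense required. By the paper's definition a Calabi--Yau fiber space has $X$ with at most klt singularities, so $(X,0)$ is automatically a klt pair. Next, writing $n = \dim(X)$, the dimension hypothesis $\dim(B) = \dim(X)-1$ translates directly to $\dim(B) = n-1$, matching the dimension assumption of Theorem \ref{maintheoremintro}. The condition that the fiber space is \emph{relatively minimal} gives exactly $K_X \equiv_{\textrm{num}} f^* L$ for some $\Q$-Cartier $\Q$-divisor $L$ on $B$, which is the numerical hypothesis $K_X+\Delta \equiv_{\textrm{num}} \phi^* L$ required (with $\Delta=0$). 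Finally, the assumption $\tilde{q}(X) = 0$ is imposed directly in the corollary's statement.

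With all hypotheses verified, Theorem \ref{maintheoremintro} applies and yields a subvariety of $X$ of dimension $n-1$ (that is, codimension one in $X$) covered by rational curves contracted by the fibration, which is precisely the conclusion of the corollary.

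There is essentially no obstacle here: the entire content of the argument is bookkeeping of definitions. The only mildly delicate point is confirming that ``Calabi--Yau fiber space'' in the paper's terminology genuinely implies $X$ is klt (so that $(X,0)$ qualifies as a klt pair in the hypothesis of Theorem \ref{maintheoremintro}), which is immediate from the definition given in Section \ref{preliminari}.
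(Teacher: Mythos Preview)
Your proposal is correct and matches the paper's own treatment: the corollary is introduced there with the sentence ``We state Theorem \ref{maintheoremintro} without boundary,'' and no separate proof is given precisely because it is the $\Delta=0$ specialisation you describe. Your verification of the hypotheses via the definitions in Section~\ref{preliminari} is exactly what is needed.
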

This result is a generalization of \cite[Theorem 0.1]{Anella}. Also for smooth varieties this theorem improves the results of \cite{diverio2016rational}, because we can certainly apply Corollary \ref{withnoboundary} to smooth varieties with finite fundamental group. 
Corollary \ref{withnoboundary} can be generalized in the direction of Theorem \ref{generalmaintheorem} and we get the following statement.

\begin{proposition}
	Let $X\xrightarrow{\phi} B $ be a genus one fibration with $\tilde{q}(X)=0$ and $K_X\equiv_{\textrm{num}} \phi^* L+\sum a_i E_i$. If we suppose that some $a_i$ is non-negative, then $X$ does contain a uniruled divisor.
\end{proposition}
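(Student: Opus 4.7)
The plan is to combine the strategy of Theorem \ref{generalmaintheorem} with the bend-and-break argument used in the corollary immediately following it, promoting the conclusion from the mere existence of rational curves to the existence of a uniruled divisor.

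First I would dispose of the trivial subcase in which every $a_i$ equals zero: there $K_X \equiv_{\mathrm{num}} \phi^{*}L$ and Corollary \ref{withnoboundary} supplies a codimension-one subvariety of $X$ covered by rational curves contracted by $\phi$, i.e.\ a uniruled divisor. Hence I may assume that some $a_i > 0$. Following the reduction in the proof of Theorem \ref{generalmaintheorem}, I would also reduce to $X$ being $\Q$-factorial by considering a $\Q$-factorialization $X_\Q \to X$: since this morphism is small, a uniruled divisor on $X_\Q$ pushes forward to a uniruled divisor on $X$, the covering rational curves being chosen to avoid the codimension-two exceptional locus.

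In the $\Q$-factorial case I write $\sum a_i E_i = D - D'$ with $D, D' \geq 0$ effective, $\phi$-exceptional and having no common components; the hypothesis some $a_i > 0$ guarantees $D \neq 0$. Then, exactly as in the proof of Theorem \ref{generalmaintheorem}, the Hodge-index argument of \cite[Lemma 2.9]{laivarietiesfibered} yields an irreducible component $E$ of $D$ and a very ample class $H$ with $E \cdot H^{n-2} \cdot K_X < 0$; consequently the curves cut out on $E$ by $n-2$ general members of $|H|$ form a covering family $\{C_t\}$ of $E$ satisfying $K_X \cdot C_t < 0$.

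It remains to deduce that $E$ is itself uniruled. The natural move is to apply bend-and-break \cite[Theorem 3.6]{Deb01} to the family $\{C_t\}$ through two general points of $E$, exactly as in the proof of the corollary following Theorem \ref{generalmaintheorem}. The main obstacle is that here we do not have the hypothesis that $X$ is smooth in codimension two, so a general $C_t$ may still meet finitely many singular points of $X$. I would circumvent this either by invoking a version of the Miyaoka--Mori uniruledness criterion tailored to klt varieties, which upgrades a covering family of $K_X$-negative curves on a subvariety to a covering family of rational curves, or by pulling back $E$ and $\{C_t\}$ to a resolution $\widetilde X \to X$, running the smooth bend-and-break argument there, and pushing the resulting rational curves back down to $X$. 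Either route exhibits $E$ as a divisor swept out by rational curves, concluding the proof.
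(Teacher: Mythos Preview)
Your overall strategy matches the paper's: reduce to the $\Q$-factorial case, invoke Lai's negativity lemma to find a component $E_1$ of the positive part covered by $K_X$-negative curves $C_t$, and then upgrade to a uniruled divisor via bend-and-break. The subtlety is entirely in the last step, and here your two proposed fixes both fall short of what the paper actually does.

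The paper passes not to a resolution but to a \emph{terminalization} $\nu\colon\tilde X\to X$. This is the key technical choice, for two reasons working in tandem. First, a terminal variety is smooth in codimension two, so general members of the base-point-free family $\nu^*H_1\cap\cdots\cap\nu^*H_{n-2}$ miss $\operatorname{Sing}(\tilde X)$ and bend-and-break \cite[Theorem 3.6]{Deb01} applies on the nose. Second, and crucially, the discrepancy formula reads $K_{\tilde X}\equiv_{\mathrm{num}}\nu^*K_X-\sum c_iG_i$ with every $c_i\ge 0$; since the moving curves $\tilde C_t$ are not contained in any $G_i$ (nor in any $\nu^*F_j$), these extra terms contribute \emph{non-positively} to $K_{\tilde X}\cdot\tilde C_t$, and the negativity $K_{\tilde X}\cdot\tilde C_t<0$ is preserved.

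Your resolution route loses exactly this second point: on an arbitrary resolution the discrepancies $d_i$ may be positive (indeed they will be whenever $X$ already has canonical or terminal singularities along the centre), and then
\[
K_{\widetilde X}\cdot\tilde C_t \;=\; \mu^*K_X\cdot\tilde C_t \;+\; \sum d_i\,(G_i\cdot\tilde C_t)
\]
need not be negative, because the strict transforms $\tilde C_t$ can meet the exceptional $G_i$ positively. Your alternative, an unspecified klt Miyaoka--Mori criterion, is not a proof until you name and verify such a statement. The correct fix is precisely what the paper does: terminalize rather than resolve.
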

\begin{proof}
	If there are no exceptional divisors we can apply Theorem \ref{maintheoremintro} to conclude. Since we are looking for a uniruled divisor and a $\Q$-factorialization is small we can assume that $X$ is $\Q$-factorial. We can write $K_X\equiv_{\textrm{num}} \phi^* L+\sum a_i E_i -\sum b_i F_i$ with $E_i\neq F_j$ the exceptional divisors and with all the coefficients non-negative. 
	The divisor $\sum  a_i E_i$ has a component $E_1$ covered by curves $C_t$ such that $\sum  a_i E_i \cdot C_t <0 $ by \cite[Lemma 2.9]{laivarietiesfibered}. Moreover the curves $C_t$ are complete intersections of the form $E_1 \cap H_1\cap \dots \cap H_{n-2}$ for some very ample divisors in $X$.
	
	Take a terminalization $\tilde{X}\xrightarrow{\nu} X$. The canonical bundle of this partial resolution is $K_{\tilde{X}}\equiv_{\textrm{num}} \nu^* K_X -\sum c_i G_i$ for some non negative number $c_i$. A component of the divisor $\nu^* E_1$ is covered by the strict transform $\tilde{C}_t$ and satisfies $\sum  a_i \nu^* E_i \cdot \tilde{C}_t <0 $. The family of curves $\tilde{C}_t $ is not contained in the other components of the support of $K_{\tilde{X}}$, so we have $K_{\tilde{X}}\cdot \tilde{C}_t< 0$. Since $\tilde{X}$ is terminal, it is smooth in codimension 2. The curves $\tilde{C}_t$ are contained in the intersection $\nu^*H_1\cap \dots \cap \nu^* H_{n-2}$. The divisors $\nu^*H_i $ are base point free, so the general element of this family does not intersect the singular points of $\tilde{X}$. This means that for a general point in $\nu^* E_1$ there is a curve contained in the regular part of $X$ that intersects negatively the canonical bundle $K_{\tilde{X}} $, hence we can apply \cite[Theorem 3.6]{Deb01} and get a family of rational curves that covers $\nu^* E_1$. Since the image of a rational curve is again rational, this implies that also $E_1$ is covered by rational curves.
\end{proof}

For smooth varieties we can do slightly better. The key point for this improvement is a very nice work on varieties covered by elliptic curves by Lazic and Peternell \cite{lazic2018maps}. For smooth varieties we can prove, using their results, the following corollary.
\begin{corollary}
	Let $X$ be a smooth projective variety of dimension $n\geq 2$ and $\tilde{q}(X)=0$. If $X$ is covered by genus one curves, then it contains a rational curve.
\end{corollary}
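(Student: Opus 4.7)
The plan is to combine the cited result of Lazi\'{c}--Peternell with Theorem~\ref{maintheoremintro}. If $X$ is uniruled there is nothing to prove, so I assume $X$ is not uniruled; in particular $K_X$ is pseudoeffective.

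The first step is to extract a genus one fibration from the covering family. Using the structural result of Lazi\'{c}--Peternell for smooth projective varieties covered by elliptic curves, one obtains, possibly on a smooth birational model $\mu : X' \to X$, a genus one fibration $\phi : X' \to B$ of relative dimension one, so $\dim B = n-1$. Since $\tilde{q}$ is a birational invariant among smooth projective varieties (as recalled at the beginning of Section~\ref{irr}), $\tilde{q}(X') = 0$.

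The second step is to reduce to the hypotheses of Theorem~\ref{maintheoremintro}. I would run the relative MMP for $K_{X'}$ over $B$: either some step of the MMP is non-trivial, in which case the corresponding $K_{X'}$-negative extremal ray over $B$ is spanned by a rational curve in a fiber of $\phi$ by Mori's Cone Theorem; or $K_{X'}$ is already $\phi$-nef from the outset. In the latter case, the general fiber of $\phi$ being an elliptic curve with trivial canonical class, the canonical bundle formula in relative dimension one (Kodaira, Fujita, Kawamata, Ambro) yields $K_{X'} \equiv_{\textrm{num}} \phi^* L$ for some $\Q$-Cartier $\Q$-divisor $L$ on $B$, and Theorem~\ref{maintheoremintro} applied with empty boundary produces an $(n-1)$-dimensional subvariety of $X'$ covered by rational curves contracted by $\phi$.

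In either case the rational curves produced live a priori on $X'$, so the final step is to descend them to $X$ via $\mu$: the image $\mu(C)$ of a rational curve $C \subset X'$ is either a point or a rational curve on $X$, so it suffices to show that at least one of the constructed rational curves is not $\mu$-contracted. In the divisorial case arising from Theorem~\ref{maintheoremintro} this is a dimension count, since the $\mu$-exceptional locus maps to a subvariety of codimension at least two in $X$ while the rational curves sweep out a divisor. The main obstacle I foresee is precisely this descent together with making the output of Lazi\'{c}--Peternell precise: guaranteeing that $\phi$ is an honest morphism of the expected relative dimension on a smooth birational model is a delicate point, but it is built into their analysis of varieties covered by elliptic curves.
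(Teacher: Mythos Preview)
Your overall strategy --- combine the Lazi\'{c}--Peternell structure theorem with Theorem~\ref{maintheoremintro} --- is exactly the paper's. The execution, however, is more roundabout than necessary and contains a genuine gap in the descent step.

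The paper argues by contradiction: assume $X$ contains \emph{no} rational curve. The point is that \cite[Theorem~6.12]{lazic2018maps} is stated precisely under this hypothesis, and its conclusion is stronger than what you invoke: it produces an \emph{equidimensional} elliptic fiber space $X\to W$ on $X$ itself, not on a birational model. Equidimensionality together with $K_X\cdot F=0$ on the general fiber forces $K_X$ to be $\phi$-numerically trivial (every vertical curve is a component of a one-dimensional fiber whose total class has zero intersection with $K_X$), so the fibration is already relatively minimal and one feeds it directly into the proof of Theorem~\ref{maintheoremintro}. No birational model, no MMP, no canonical bundle formula, no descent.

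Your descent argument, as written, does not work. The rational curves produced by Theorem~\ref{maintheoremintro} sweep out a divisor $D$ in $X'$, but nothing prevents $D$ from being one of the $\mu$-exceptional divisors: your ``dimension count'' compares the codimension of $\mu(\mathrm{Exc}(\mu))$ in $X$ with the codimension of $D$ in $X'$, which are measurements in different varieties. If $D\subset\mathrm{Exc}(\mu)$ then $\mu(D)$ has codimension $\geq 2$ in $X$ and every rational curve in $D$ could collapse. The same issue is left open in your MMP branch. The clean fix is exactly the paper's: put ``no rational curves'' into the hypotheses of the Lazi\'{c}--Peternell theorem and avoid the birational model altogether.
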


\begin{proof}[Proof of Theorem \ref{lazic}]
	Suppose by contradiction that $X$ does not contain rational curves. We can apply \cite[Theorem 6.12]{lazic2018maps} and find an equidimensional fiber space $X\rightarrow W$. This fibration is relatively minimal and we can proceed as in the proof of Theorem \ref{maintheoremintro} and find an irregular quasi-\'etale cover of $X$.
\end{proof}
In particular this proves the following result.

\begin{corollary}
	Let $X$ be a smooth projective variety covered by elliptic curves but with no rational curves. Then the fundamental group of $X$ is infinite.
\end{corollary}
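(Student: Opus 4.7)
The plan is to argue by contrapositive, using Corollary \ref{lazic} together with the standard Hodge-theoretic fact that positive irregularity forces infinite fundamental group.

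First I would assume that $X$ is covered by elliptic curves and contains no rational curve. By Corollary \ref{lazic} applied to $X$ (which is smooth), the vanishing $\tilde{q}(X)=0$ would immediately produce a rational curve in $X$, contradicting our assumption. Therefore $\tilde{q}(X)>0$, and by the very definition of the augmented irregularity there exists a finite quasi-\'etale cover $f:Z\to X$ with $q(Z)>0$.

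Next I would upgrade $f$ to a finite \'etale cover. Since $X$ is smooth, the purity of the branch locus (Zariski--Nagata) guarantees that any quasi-finite morphism \'etale in codimension one is in fact \'etale everywhere, so $f$ corresponds to a subgroup of $\pi_1(X)$ of finite index $[\pi_1(X):\pi_1(Z)]=\deg f<\infty$.

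Finally I would conclude using the fundamental group of $Z$. Because $Z$ is smooth projective with $q(Z)>0$, the Albanese morphism $Z\to\operatorname{Alb}(Z)$ is non-trivial and induces a surjection $\pi_1(Z)\twoheadrightarrow H_1(Z,\mathbb{Z})_{\textrm{free}}\neq 0$ onto a non-zero free abelian group; in particular $\pi_1(Z)$ is infinite. Since $\pi_1(Z)$ is a finite-index subgroup of $\pi_1(X)$, it follows that $\pi_1(X)$ is infinite as well.

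The only step that requires a little care is the passage from ``quasi-\'etale'' in the definition of $\tilde{q}$ to ``\'etale'' on the smooth variety $X$; but this is exactly the content of purity of branch locus, so no genuine obstacle arises, and the corollary is essentially a direct reading of Corollary \ref{lazic} combined with classical topology of smooth projective varieties.
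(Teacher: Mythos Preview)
Your argument is correct and is essentially the paper's own approach spelled out in full: the paper simply records the corollary as an immediate consequence of Corollary~\ref{lazic}, relying on the observation in Section~\ref{irr} that a smooth variety with finite fundamental group has $\tilde{q}=0$, which is exactly the contrapositive of your steps (3)--(6). The only cosmetic point is that Corollary~\ref{lazic} assumes $n\geq 2$, but the one-dimensional case is trivial since $X$ would itself be an elliptic curve.
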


We conclude with an useful criterion to find elliptic fibration due to K\'ollar.
\begin{theorem}\label{conditions}
	Let $X$ be a variety with at most log terminal singularities of dimension $n$, nef canonical bundle and $L$ a Cartier divisor on $X$. 
	Assume moreover 
	\begin{itemize}
		\item[1)] $L^{n-2}\cdot \operatorname{Td}_2(X)>0$.
		\item[2)] $L$ is nef.
		\item[3)] $L-\varepsilon K_X$ is nef for $0\leq\varepsilon<<1$.
		\item[4)] $L^n=0$.
		\item[5)] $L^{n-1}\neq 0 $ in $H^{2n-2}(X, \Q)$.
		
	\end{itemize}
	 Then $X$ with the Iitaka fibration associated to $L$ is a relatively minimal genus one fibration.
\end{theorem}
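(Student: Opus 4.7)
The plan is to show that $L$ is semi-ample, that its Iitaka morphism $\phi:X\to B$ has base of dimension $n-1$, and that its general fiber is a smooth curve of genus one; relative minimality then follows automatically from $K_X$ being nef.

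I would proceed in two steps. The first step is to establish semi-ampleness of $L$ and to compute the dimension of its image. By condition (3), the divisor $aL-K_X = (a-\varepsilon)L + (L-\varepsilon K_X)$ is nef for all $a \geq \varepsilon$, so Kawamata--Viehweg vanishing applies and $h^0(X, mL) = \chi(X, mL)$ for every sufficiently large and divisible $m$. Expanding $\chi(X, mL)$ via Hirzebruch--Riemann--Roch (in Reid's klt version), the leading terms involve $L^n$, $L^{n-1}\cdot K_X$, and $L^{n-2}\cdot\operatorname{Td}_2(X)$, of which the first two vanish---the first by (4), the second by the identity $L^{n-1}\cdot K_X=0$ proved below---while the third is strictly positive by (1). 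Koll\'ar's variant of the Base-Point Free Theorem (using the positivity afforded by (1) as a substitute for bigness, which fails because $L^n=0$) then yields that $|mL|$ is base-point free for some $m$, so $L$ is semi-ample. Conditions (4) and (5) together pin down $\dim \phi(X)=n-1$: indeed, $L\sim_{\mathbb{Q}}\phi^*H$ for $H$ ample on $\phi(X)$, so $L^k$ vanishes exactly when $k > \dim\phi(X)$.

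The second step is the computation of the genus of the general fiber. Since $K_X$ and $L$ are both nef, $L^{n-1}\cdot K_X\geq 0$. Intersecting the nef class $L^{n-1}$ with the nef divisor $L-\varepsilon K_X$ gives the reverse inequality, $L^{n-1}\cdot K_X \leq L^n/\varepsilon=0$; hence $L^{n-1}\cdot K_X=0$. Writing $L\sim_{\mathbb{Q}}\phi^*H$ with $H$ ample on $B$, one has $L^{n-1} = H^{n-1}\cdot [F]$, where $F$ denotes a general fiber. Such a fiber is a smooth connected curve contained in the smooth locus of $X$, so adjunction gives $K_X\cdot F=\deg K_F=2g(F)-2$. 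The identity $0 = L^{n-1}\cdot K_X = (2g(F)-2)H^{n-1}$ then forces $g(F)=1$, since $H^{n-1}>0$.

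The principal obstacle lies in the semi-ampleness step: the classical Base-Point Free Theorem requires $aL-K_X$ to be nef and big, but (4) rules out bigness outright, and one must invoke Koll\'ar's finer criterion in which positivity of the Todd intersection $L^{n-2}\cdot\operatorname{Td}_2(X)$ plays the role of bigness. In the klt setting, both the vanishing theorem and Riemann--Roch acquire correction terms localised on the singular locus; one must verify that these corrections do not overwhelm the main term, for which Reid's singular Riemann--Roch combined with standard klt vanishing suffice. Once semi-ampleness of $L$ is in place, the remainder of the argument is purely numerical and proceeds as above.
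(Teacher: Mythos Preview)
The paper does not actually prove this theorem: it simply attributes it to Koll\'ar \cite[Theorem~10]{kollar2015deformations}, and then adds a remark showing that the extra hypothesis $L^{n-1}\cdot K_X=0$ appearing in Koll\'ar's statement is redundant given conditions (2)--(4). Your proposal therefore goes much further than the paper, in that you sketch the content of the cited result rather than merely invoking it. Your derivation of $L^{n-1}\cdot K_X=0$ in Step~2 is exactly the paper's remark (expand $(L-\varepsilon K_X)^n\geq 0$ and use nefness of $L$ and $K_X$).

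A few comments on the sketch itself. The algebraic decomposition you give is off: $(a-\varepsilon)L+(L-\varepsilon K_X)=(a+1-\varepsilon)L-\varepsilon K_X$, not $aL-K_X$; the intended identity is $aL-K_X=(a-1/\varepsilon)L+\tfrac{1}{\varepsilon}(L-\varepsilon K_X)$, nef for $a\geq 1/\varepsilon$. More substantively, Kawamata--Viehweg vanishing in its standard form requires $mL-K_X$ to be nef and \emph{big}, which fails here since $L^n=0$; you correctly flag this later, but then the earlier assertion $h^0(X,mL)=\chi(X,mL)$ is not a consequence of ordinary vanishing and is itself part of Koll\'ar's finer argument rather than a preliminary to it. Finally, the claim that relative minimality ``follows automatically from $K_X$ being nef'' deserves one more sentence: nefness of $K_X$ plus $K_X\cdot F=0$ on the general fibre gives $K_X\cdot C=0$ for every curve $C$ in every one-dimensional fibre (the components have non-negative intersection with $K_X$ and their weighted sum vanishes), but to conclude $K_X\equiv_{\textrm{num}}\phi^*L'$ one should still say a word about equidimensionality of $\phi$ and about descent of $\phi$-numerically trivial classes through a fibration with connected fibres.
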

This result is \cite[Theorem 10]{kollar2015deformations}. In the same article there is also a log version of this theorem. 
\begin{remark}
	In \cite[Theorem 10]{kollar2015deformations} there is the further hypothesis $L^{n-1}\cdot K_X=0$, but this condition actually follows from the others. Indeed if $L-\varepsilon K_X$ is nef then $$0\leq (L-\varepsilon K_X)^n=L^n-n\varepsilon L^{n-1}\cdot K_X+...=-n\varepsilon L^{n-1}\cdot K_X+...$$
	The divisors $L$ and $K_X$ are nef, hence $L^{n-1}\cdot K_X\geq 0$. It follows that $L^{n-1}\cdot K_X=0$. 
\end{remark}
It follows from Theorem \ref{conditions} the following result.

\begin{corollary}
	Let $X$ be a variety with log terminal singularities and $\tilde{q}(X)=0$. If there exists a line bundle $L$ on $X$ such that the conditions from $1$ to $5$ of Theorem \ref{conditions} are satisfied, then $X$ does contain rational curves.
\end{corollary}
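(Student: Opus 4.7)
The plan is to dispatch this as an almost immediate combination of Theorem~\ref{conditions} with Corollary~\ref{withnoboundary} (i.e.\ the boundary-free form of Theorem~\ref{maintheoremintro}). The only subtlety is that Theorem~\ref{conditions} requires $K_X$ to be nef, whereas the corollary's hypotheses do not explicitly assume this. I would therefore split into two cases on the nefness of $K_X$.

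First, if $K_X$ is not nef, then since $X$ has log terminal singularities the Cone Theorem directly produces a $K_X$-negative extremal ray, and hence a rational curve in $X$, so we are done without ever using $L$. Assume from now on that $K_X$ is nef. Then all the hypotheses of Theorem~\ref{conditions} are satisfied, and we conclude that the Iitaka fibration $\varphi_L : X \to B$ associated to $L$ is a relatively minimal genus one fibration. In particular $K_X \equiv_{\textrm{num}} \varphi_L^* H$ for some $\mathbb{Q}$-Cartier $\mathbb{Q}$-divisor $H$ on $B$, and $\dim B = \kappa(L) = n-1$ because $L^n = 0$ while $L^{n-1} \neq 0$ in $H^{2n-2}(X,\mathbb{Q})$.

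Second, with this fibration in hand and with the standing hypothesis $\tilde{q}(X) = 0$, the triple $(X \to B)$ is exactly a relatively minimal Calabi--Yau fiber space of relative dimension one with vanishing augmented irregularity. Corollary~\ref{withnoboundary} (equivalently Theorem~\ref{maintheoremintro} applied with $\Delta = 0$) then gives a codimension one subvariety of $X$ swept out by rational curves contracted by $\varphi_L$; in particular $X$ contains rational curves.

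The main obstacle, if one can call it that, is purely bookkeeping: one must observe that the corollary's hypotheses as stated do not include the nefness of $K_X$ that appears in Theorem~\ref{conditions}, and verify that this is harmless because a non-nef canonical class trivially yields rational curves. Beyond that, the proof is a two-line composition of the two invoked results, with no genuine new content.
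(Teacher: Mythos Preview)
Your argument is correct and is exactly the route the paper intends: the corollary is stated without proof, as an immediate consequence of Theorem~\ref{conditions} combined with Theorem~\ref{maintheoremintro} (equivalently Corollary~\ref{withnoboundary}). Your case split on the nefness of $K_X$ is a sensible piece of bookkeeping, since the corollary as stated does not explicitly repeat the standing hypothesis of Theorem~\ref{conditions} that $K_X$ be nef; handling the non-nef case via the Cone Theorem is harmless and standard.
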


Let us conclude with an example of a regular threefold with a genus one fibration with no rational curves.

\begin{example}
	Consider in $\mathbb{P}^3$ a generic hypersurface $S$ of degree at least 5. By \cite{clemens1986curves} there are no rational curves in $S$. There is a $2:1$ cover $\tilde{S}$ of $S$ ramified along the intersection between $S$ and a quadric that is constructed as a complete intersection in $\mathbb{P}^4$. This cover comes with an involution $\eta \in \tilde{S}$ such that $S$ is the quotient of $\tilde{S}$ by this involution.
	Consider the order two automorphism $\alpha$ of the product  $Y:=\tilde{S} \times (\mathbb{C}/(\mathbb{Z}\cdot 1\oplus \mathbb{Z}\cdot i))$ defined by $\alpha(x,z)=(\eta(x), \frac{1+i}{2}-z)$. The quotient of $Y$ under this action gives a genus one fibration $X:=Y /\alpha\xrightarrow{\pi} S$. The quotient map $Y\rightarrow X $ is globally \'etale, that implies that all the fibers of $\pi$ are genus one curves. The holomorphic one forms on $X$ are exactly the holomorphic one forms on $Y$ that are invariant under the action of $\alpha$. Since $\tilde{S}$ is a complete intersection in $\mathbb{P}^4$ it is regular. A one forms $\omega$ on $Y$ can be written as $\pi^* \beta  $ where $\pi$ is the projection from $Y$ to $ E:=\mathbb{C}/(\mathbb{Z}\cdot 1\oplus \mathbb{Z}\cdot i)$. By the choice of the automorphism on $E$ we see that $\alpha^* \omega=-\omega$, hence $q(X)=0$. Since all the vertical curves have genus one and an horizontal rational curve gives a rational curve on $S$, there are no rational curves on $X$. So this is an example of a regular genus one fibration with no rational curves.
\end{example}


\linespread{0.9}
\footnotesize

	

\bibliographystyle{alpha}
\linespread{0.9}
\footnotesize 
\bibliography{C:/Users/fabrizio/Desktop/uni/personali/biblio}{}


\end{document}